\documentclass[11pt,a4paper]{article}
\pdfoutput=1

\usepackage{lmodern}
\usepackage[normalem]{ulem}
\usepackage{titlesec}
\titleformat{\section}
  {\normalfont\fontsize{16pt}{10pt}\selectfont\scshape}
  {\thesection}
  {0.6em}
  {}
\titleformat{\subsection}
  {\normalfont\fontsize{14pt}{8pt}\selectfont\scshape}
  {\thesubsection}
  {0.6em}
  {}

\usepackage{abraces}
\usepackage[utf8]{inputenc}
\usepackage{amssymb}
\usepackage{amsmath}
\usepackage{amsfonts}
\usepackage{bbm}
\usepackage{amsthm}
\usepackage{mathrsfs}
\usepackage{hyperref}
\usepackage{color}
\usepackage[margin=2.5cm]{geometry}
\usepackage[all,cmtip]{xy}
\usepackage{graphicx}
\usepackage{float}
\usepackage{varwidth}
\usepackage{comment}
\usepackage{enumitem}
\usepackage{cancel}

\usepackage{bm}
\usepackage{mathtools}

\usepackage{svg}

\usepackage{upgreek}
\usepackage{rotating}

\usepackage{tikz}
\usetikzlibrary{shapes.geometric}

\usepackage{tikz}
\usetikzlibrary{cd,nfold,matrix,arrows,calc,decorations.pathmorphing,fit,shapes.geometric,decorations.pathreplacing,positioning}
\tikzset{Rightarrow/.style={double equal sign distance,>={Implies},->},
triple/.style={-,preaction={draw,Rightarrow}},
quadruple/.style={preaction={draw,Rightarrow,shorten >=0pt},shorten >=1pt,-,double,double
distance=0.2pt}}

\usepackage{quiver}

\mathchardef\mhyphen="2D

\definecolor{darkred}{rgb}{0.8,0.1,0.1}
\hypersetup{
	colorlinks=true,         % false: boxed links; true: colored links,false is default
	linkcolor=darkred,
	citecolor=blue,
}

\newtheoremstyle{scstyle}%
  {0.4em}   % Space above
  {0.4em}   % Space below
  {}      % Body font
  {}      % Indent amount
  {\fontsize{12pt}{8pt}\selectfont\scshape} % Head font (THIS is the key)
  {.}     % Punctuation after head
  {.5em}  % Space after head
  {}      % Head spec

\theoremstyle{scstyle}
\newtheorem{theo}{Theorem}[section]
\newtheorem{lem}[theo]{Lemma}
\newtheorem{propo}[theo]{Proposition}
\newtheorem{conj}[theo]{Conjecture}
\newtheorem{cor}[theo]{Corollary}

\theoremstyle{scstyle}
\newtheorem{defi}[theo]{Definition}

\newtheorem{exx}[theo]{Example}
\newenvironment{ex}
{\pushQED{\qed}\exx}
{\popQED\endexx}

\newtheorem{remm}[theo]{Remark}
\newenvironment{rem}
{\pushQED{\qed}\remm}
{\popQED\endremm}

\numberwithin{equation}{section}

\def\nn{\nonumber}

\def\v{\varepsilon}

\def\i{\mathrm{I}}
\def\ii{\mathrm{II}}
\def\iii{\mathrm{III}}
\def\iv{\mathrm{IV}}
\def\V{\mathrm{V}}

\def\j{\tilde{j}}
\def\k{\tilde{k}}

\def\p{\tilde{p}}
\def\q{\tilde{q}}

\def\t{\overline{t_{13}}}

\def\bbR{\mathbb{R}}
\def\bbC{\mathbb{C}}
\def\bbN{\mathbb{N}}

\def\KZ{\mathrm{KZ}}
\def\CM{\mathrm{CM}}

\def\End{\mathrm{End}}

\def\ps{\mathrm{ps}}
\def\id{\mathrm{id}}
\def\Id{\mathrm{Id}}

\def\Ch{\mathsf{Ch}}

\def\BB{\mathsf{B}}
\def\C{\mathsf{C}}
\def\D{\mathsf{D}}

\def\E{\mathsf{E}}

\def\r{\mathrm{r}}

\def\dgCat{\mathsf{dgCat}}

\def\ad{\mathrm{ad}}

\def\A{\mathcal{A}}
\def\B{\mathcal{B}}

\def\H{\mathcal{H}}

\def\L{\mathcal{L}}

\def\O{\mathcal{O}}

\def\R{\mathcal{R}}

\newcommand\numberthis{\addtocounter{equation}{1}\tag{\theequation}}

\DeclareMathOperator*{\smallbox}{\text{\raisebox{0.15ex}{\scalebox{0.7}{$\boxtimes$}}}}

\makeatletter
\newcommand{\xRrightarrow}[2][]{\ext@arrow 0359\Rrightarrowfill@{#1}{#2}}
\newcommand{\Rrightarrowfill@}{\arrowfill@\equiv\equiv\Rrightarrow}
\newcommand{\xLleftarrow}[2][]{\ext@arrow 3095\Lleftarrowfill@{#1}{#2}}
\newcommand{\Lleftarrowfill@}{\arrowfill@\Lleftarrow\equiv\equiv}
\makeatother

\def\sk{\vspace{2mm}}

\makeatletter
\let\@fnsymbol\@alph
\makeatother

\title{%
Cartier integration of infinitesimal 2-braidings via 2-holonomy of the CMKZ 2-connection, II: The pentagonator
}

\author{%
Cameron Kemp\vspace{4mm}\\
{\small School of Mathematical Sciences, University of Nottingham,}\\
{\small University Park, Nottingham NG7 2RD, United Kingdom.}\vspace{4mm}\\
{\small \begin{tabular}{ll}
Email: & \href{mailto:cameron.kemp@nottingham.ac.uk}{\texttt{cameron.kemp@nottingham.ac.uk}}
\vspace{2mm}
\end{tabular}
}
}

\begin{document}

\maketitle

\begin{abstract}
\noindent This is a continuation of the previous paper (arXiv:2508.01944) in this series. We recontextualise Cirio and Martins' work to motivate our fundamental conjecture that the Drinfeld-Kohno (Lie) 2-algebra has trivial cohomology. It is then shown that this conjecture implies the following: given a coherent totally symmetric infinitesimal 2-braiding $t$, every modification endomorphic on the zero transformation vanishes if it is made up of the four-term relationators and whiskerings by $t$. The power of such an implication is that, in our context, one need only construct the data of a braided monoidal 2-category and it will automatically satisfy the axioms. We thus conclude by constructing the pentagonator via Cirio and Martins' Knizhnik-Zamolodchikov 2-connection over the configuration space of 4 distinguishable particles on the complex line, $Y_4$. In particular, we make use of Bordemann, Rivezzi and Weigel's pentagon in $Y_4$. 
\end{abstract}

\paragraph*{Keywords:}Knizhnik-Zamolodchikov 2-connection, braided monoidal 2-categories, deformation quantisation, infinitesimal 2-braidings, higher gauge theory, monodromy

\paragraph*{MSC 2020:} 17B37, 18N10, 53D55, 32S40

\tableofcontents

\section*{Our 2-categorical quantisation problem}
In order to understand the context of Section \ref{subsec:conjecture} and make this paper relatively self-contained, we must first provide a summary of the relevant material in \cite{Kem25a} regarding infinitesimal 2-braidings and braided monoidal 2-categories. 
\sk

Let us begin by recalling that the category $\Ch^{[-1,0]}$ of cochain complexes concentrated in degrees $\{-1,0\}$ is symmetric monoidal with the monoidal product given by the truncated tensor product $\boxtimes$ and the symmetric braiding given by the swap $\tau$. One can then use this category as a base for enrichment (as in \cite[Chapter 3]{Riehl} or \cite{Kelly}) and study the 2-category $\dgCat^{[-1,0]}$ of $\Ch^{[-1,0]}$-categories, $\Ch^{[-1,0]}$-functors and $\Ch^{[-1,0]}$-natural transformations. As always in enriched category theory, the 2-category itself $\dgCat^{[-1,0]}$ is symmetric monoidal with the monoidal product given by the local truncated tensor product $\smallbox$. To be clear, given a pair of $\Ch^{[-1,0]}$-categories $\C$ and $\D$, we define $\C\smallbox\D$ as having objects given by juxtapositions $UV$ where $U\in\C$ and $V\in\D$, and morphisms given by truncations $f,g:=f\boxtimes g$ where $f\in\C[U,U']$ and $g\in\D[V,V']$. This symmetric monoidal 2-category $\dgCat^{[-1,0]}$ allows one to produce a simple definition of a symmetric strict monoidal $\Ch^{[-1,0]}$-category $(\C,\otimes,I,\gamma)$. The relevant infinitesimal deformations of such a symmetric strict monoidal structure are a weakened variant of $\Ch^{[-1,0]}$-natural transformations.
\begin{defi}\label{def:pseudonatural}
Given $\Ch^{[-1,0]}$-categories $\C,\D$ and $\Ch^{[-1,0]}$-functors $F,G:\C\to\D$, a \textbf{pseudonatural
transformation} $\xi:F\Rightarrow G:\C\to\D$ consists of the following two pieces of data:
\begin{enumerate}
\item[(i)] For each object $U\in \C$, a degree 0 morphism $\xi_U\in\D[F(U),G(U)]^0$.
\item[(ii)] For each pair of objects $U,U'\in \C$, a homotopy $\xi_{(\cdot)}:\C[U,U']\to\D\left[F(U),G(U')\right][-1]$. 
\end{enumerate}
These two pieces of data are required to satisfy the following two axioms: for all $f\in\C[U,U']$ and $f'\in \C[U',U'']$,
\begin{subequations}
\begin{alignat}{2}
G(f)\,\xi_U - \xi_{U'}\,F(f)=&~\partial(\xi_f)+\xi_{\partial(f)}&&,\label{eq:dubindex is homotopy}\\
\xi_{f' f}=&~\xi_{f'}\,F(f) + G(f')\,\xi_f\quad&&.\label{eqn:dubindex splits prods}
\end{alignat}
\end{subequations}
\end{defi}
\begin{defi}\label{def:infinitesimal 2-braiding}
Given a symmetric strict monoidal $\Ch^{[-1,0]}$-category $(\C,\otimes,I,\gamma)$, we say a pseudonatural transformation $t:\otimes\Rightarrow\otimes:\C\smallbox\C\to\C$ is an \textbf{infinitesimal 2-braiding} if, for $f\in\C[U,U']$, $g\in\C[V,V']$ and $h\in\C[W,W']$, we have:
\begin{subequations}\label{subeq:left inf hex}
\begin{alignat}{6}
t_{U(VW)}=&~t_{UV}\otimes1_W+(\gamma_{VU}\otimes1_W)(1_V\otimes t_{UW})(\gamma_{UV}\otimes1_W)\quad&&,\label{eq:t_U(VW)}\\
t_{f,g\otimes h}=&~t_{f,g}\otimes h+(\gamma_{V'U'}\otimes1_{W'})(g\otimes t_{f,h})(\gamma_{UV}\otimes1_W)&&,\label{eq:t_f(gh)}
\end{alignat}
\end{subequations}
and
\begin{subequations}\label{subeq:right inf hex}
\begin{alignat}{6}
t_{(UV)W}=&~1_U\otimes t_{VW}+(1_U\otimes\gamma_{WV})(t_{UW}\otimes1_V)(1_U\otimes\gamma_{VW})\quad&&,\label{eq:t_(UV)W}\\
t_{f\otimes g,h}=&~f\otimes t_{g,h}+(1_{U'}\otimes\gamma_{W'V'})(t_{f,h}\otimes g)(1_U\otimes\gamma_{VW})&&.\label{eq:t_(fg)h}
\end{alignat}
\end{subequations}
An infinitesimal 2-braiding is \textbf{symmetric} (or, $\gamma$-\textbf{equivariant}) if it intertwines with the symmetric braiding $\gamma$, i.e.:
\begin{equation}
\gamma_{U,V}\,t_{U,V}=t_{V,U}\,\gamma_{U,V}\qquad,\qquad\gamma_{U',V'}\,t_{f,g}=t_{g,f}\,\gamma_{U,V}\quad.
\end{equation}
\end{defi}
We denote \eqref{subeq:left inf hex} and \eqref{subeq:right inf hex} as, respectively,
\begin{equation}\label{subeq:index inf hex}
t_{1(23)}=t_{12}+t_{13}\qquad,\qquad t_{(12)3}=t_{13}+t_{23}\quad.
\end{equation}
In the ordinary context of 1-category theory, \textit{naturality} of an infinitesimal braiding $t$ implies that it satisfies the \textbf{four-term relations},
\begin{equation}\label{eq:four-term relations}
[t_{12},t_{13}+t_{23}]=0=[t_{23},t_{12}+t_{13}]\quad.
\end{equation}
In our context, \textit{pseudonaturality} of an infinitesimal 2-braiding $t$ obstructs the four-term relations in a very specific way. 
\begin{defi}\label{defi: modification}
Given pseudonatural transformations $\xi,\xi':F\Rightarrow G:\C\rightarrow\D$, a \textbf{modification} $\Xi:\xi\Rrightarrow\xi'$ consists of, for each object $U\in\C$, a morphism $\Xi_U\in\D[F(U),G(U)]^{-1}$ such that
\begin{subequations}
\begin{equation}\label{eq:mod components are homotopies}
\partial(\Xi_U)=\xi_U-\xi'_U
\end{equation}
and, for every $f\in\C[U,V]$,
\begin{equation}\label{eqn:mod single condition}
\Xi_VF(f)+\xi_f=\xi'_f+G(f)\,\Xi_U\quad.
\end{equation}
\end{subequations}
\end{defi}
The obstruction to the four-term relations is a special modification, one witnessing the lack of exchange between the two different compositions of pseudonatural transformations. To be specific, the \textbf{vertical composition}
\begin{subequations}\label{eqn: ver comp of pseudos}
\begin{equation}
\begin{tikzcd}
	{\C} && {\D}
	\arrow[""{name=0, anchor=center, inner sep=0}, "G"{description}, from=1-1, to=1-3]
	\arrow[""{name=1, anchor=center, inner sep=0}, "F", curve={height=-32pt}, from=1-1, to=1-3]
	\arrow[""{name=2, anchor=center, inner sep=0}, "H"',curve={height=32pt}, from=1-1, to=1-3]
	\arrow["\xi\,"', shorten <=5pt, shorten >=5pt, Rightarrow, from=1, to=0]
	\arrow["\theta\,"', shorten <=5pt, shorten >=5pt, Rightarrow, from=0, to=2]
\end{tikzcd}~~\stackrel{\circ}{\longmapsto}~~
\begin{tikzcd}
	{\C} && {\D}
	\arrow[""{name=1, anchor=center, inner sep=0}, "F", curve={height=-32pt}, from=1-1, to=1-3]
	\arrow[""{name=2, anchor=center, inner sep=0}, "H"', curve={height=32pt}, from=1-1, to=1-3]
	\arrow["\theta\xi\,"', shorten <=5pt, shorten >=5pt, Rightarrow, from=1, to=2]
\end{tikzcd}
\end{equation}
is defined by setting, for $f\in\C[U,V]$ :
\begin{equation}
(\theta\xi)_U:=\theta_U\xi_U\qquad,\qquad(\theta\xi)_f:=\theta_f\xi_U+\theta_V\xi_f
\end{equation}
\end{subequations}
whereas the \textbf{horizontal composition}
\begin{subequations}\label{subeq:horcomp of pseudos}
\begin{equation}
\begin{tikzcd}
	{\C} && {\D} && {\E}
	\arrow[""{name=0, anchor=center, inner sep=0}, "F", curve={height=-32pt}, from=1-1, to=1-3]
	\arrow[""{name=1, anchor=center, inner sep=0}, "G"', curve={height=32pt}, from=1-1, to=1-3]
	\arrow[""{name=2, anchor=center, inner sep=0}, "F'", curve={height=-32pt}, from=1-3, to=1-5]
	\arrow[""{name=3, anchor=center, inner sep=0}, "G'"', curve={height=32pt}, from=1-3, to=1-5]
	\arrow["\xi\,"', shorten <=6pt, shorten >=6pt, Rightarrow, from=0, to=1]
	\arrow["\upsilon\,"', shorten <=6pt, shorten >=6pt, Rightarrow, from=2, to=3]
\end{tikzcd}
~~\stackrel{*}{\longmapsto}~~
\begin{tikzcd}
	{\C} && {\E}
	\arrow[""{name=0, anchor=center, inner sep=0}, "F'F", curve={height=-32pt}, from=1-1, to=1-3]
	\arrow[""{name=1, anchor=center, inner sep=0}, "G'G'"', curve={height=32pt}, from=1-1, to=1-3]
	\arrow["\upsilon\,*\,\xi\,"', shorten <=6pt, shorten >=6pt, Rightarrow, from=0, to=1]
\end{tikzcd}
\end{equation}
is defined by setting:
\begin{equation}
(\upsilon*\xi)_U:=\upsilon_{G(U)}F'(\xi_U)\qquad,\qquad
(\upsilon*\xi)_f:=\upsilon_{G(f)}F'(\xi_U)
+\upsilon_{G(V)}F'(\xi_f)\quad.
\end{equation}
\end{subequations}
The vertical composition \eqref{eqn: ver comp of pseudos} and horizontal composition \eqref{subeq:horcomp of pseudos} are associative and admit the obvious units $\Id_F$ and $\Id_{\id_\C}$, respectively. A \textbf{pseudonatural isomorphism} $\xi:F\Rightarrow G:\C\to\D$ is one which admits an inverse $\xi^{-1}:G\Rightarrow F:\C\to\D$ under vertical composition \eqref{eqn: ver comp of pseudos}.
\begin{defi}\label{def:exchanger}
Given any composable diagram of the form
\begin{subequations}\label{eqn:compositioncoherences}
\begin{equation}
% https://q.uiver.app
\begin{tikzcd}
	{\C} && {\D} && {\E}
	\arrow[""{name=0, anchor=center, inner sep=0}, "F", curve={height=-32pt}, from=1-1, to=1-3]
	\arrow[""{name=1, anchor=center, inner sep=0}, "G"{description}, from=1-1, to=1-3]
	\arrow[""{name=2, anchor=center, inner sep=0}, "H"', curve={height=32pt}, from=1-1, to=1-3]
	\arrow[""{name=3, anchor=center, inner sep=0}, "{F'}", curve={height=-32pt}, from=1-3, to=1-5]
	\arrow[""{name=4, anchor=center, inner sep=0}, "{H'}"', curve={height=32pt}, from=1-3, to=1-5]
	\arrow[""{name=5, anchor=center, inner sep=0}, "{G'}"{description}, from=1-3, to=1-5]
	\arrow["\xi\,"', shorten <=5pt, shorten >=5pt, Rightarrow, from=0, to=1]
	\arrow["\theta\,"', shorten <=5pt, shorten >=5pt, Rightarrow, from=1, to=2]
	\arrow["{\upsilon\,}"', shorten <=5pt, shorten >=5pt, Rightarrow, from=3, to=5]
	\arrow["{\lambda\,}"', shorten <=5pt, shorten >=5pt, Rightarrow, from=5, to=4]
\end{tikzcd}\quad,
\end{equation}
the \textbf{exchanger} is the modification
\begin{equation}
*^2_{\lambda,\theta|\upsilon,\xi}\,:\,(\lambda*\theta)(\upsilon*\xi)~\Rrightarrow~\lambda\upsilon*\theta\xi\qquad,\qquad\left(*^2_{\lambda,\theta|\upsilon,\xi}\right)_U\,:=\,\lambda_{H(U)}\,\upsilon_{\theta_U}F'(\xi_U)\quad.
\end{equation}
\end{subequations}
\end{defi}
The modifications of Definition \ref{defi: modification} admit three different levels of composition though we only describe in detail the highest two: 
\begin{enumerate}
\item[(i)] Given $\xi\xRrightarrow{\Xi}\xi'\xRrightarrow{\Xi'}\xi'':F\Rightarrow G:\C\rightarrow\D$, the \textbf{lateral composition} $\Xi'\cdot\Xi:\xi\Rrightarrow\xi''$ is defined by setting $(\Xi'\cdot\Xi)_U:=\Xi'_U+\Xi_U$ for all $U\in\C$. This composition is associative, unital with respect to the \textbf{vanishing modifications} $0:\xi\Rrightarrow\xi$, and invertible with respect to the \textbf{reverse} $\overleftarrow{\Xi}:\xi'\Rrightarrow\xi$ defined by $\overleftarrow{\Xi}_U:=-\Xi_U$.
\item[(ii)] Given $\Xi:\xi\Rrightarrow\xi':F\Rightarrow G:\C\rightarrow\D$ and $\Theta:\theta\Rrightarrow\theta':G\Rightarrow H:\C\rightarrow\D$, the \textbf{vertical composition} $\Theta\Xi:\theta\xi\Rrightarrow\theta'\xi':F\Rightarrow H$ is defined by setting $(\Theta\Xi)_U:=\Theta_U\xi'_U+\theta_U\Xi_U$. This composition is also associative and unital. We define the \textbf{whiskering} of $\Theta$ by $\xi$ as the modification $\Theta\xi:\theta\xi\Rrightarrow\theta'\xi$ with components $(\Theta\xi)_U:=\Theta_U\xi_U$ and likewise for the whiskering of $\Xi$ by $\theta$. A modification $\Xi$ is invertible under the vertical composition if and only if both $\xi$ and $\xi'$ are pseudonatural isomorphisms in which case the inverse is given by $\Xi^{-1}:=\xi^{-1}\overleftarrow{\Xi}\xi'^{-1}:\xi^{-1}\Rrightarrow\xi'^{-1}:G\Rightarrow F$. 
\end{enumerate}
\begin{rem}\label{rem:comp of mods}
The horizontal composition of modifications is also associative and unital. Furthermore, the vertical composition $\circ$ is functorial and the horizontal composition $*$ is a strictly-unitary pseudofunctor hence we have a tricategory $\dgCat^{[-1,0],\ps}$ of $\Ch^{[-1,0]}$-categories, $\Ch^{[-1,0]}$-functors, pseudonatural transformations and modifications. The \textit{only} weakness of this tricategory is given by the nontriviality of the exchanger from Definition \ref{def:exchanger}.   
\end{rem}
In order to determine the exchanger which obstructs the four-term relations, we still have to describe the \textbf{monoidal composition}
\begin{subequations}\label{subeq:moncomp of pseudos}
\begin{equation}
\begin{tikzcd}
	{\C} && {\C'}
	\arrow[""{name=0, anchor=center, inner sep=0}, "F", curve={height=-32pt}, from=1-1, to=1-3]
	\arrow[""{name=1, anchor=center, inner sep=0}, "{F'}"', curve={height=32pt}, from=1-1, to=1-3]
	\arrow["\xi\,"', shorten <=6pt, shorten >=6pt, Rightarrow, from=0, to=1]
\end{tikzcd}
~~
\begin{tikzcd}
	{\D} && {\D'}
	\arrow[""{name=0, anchor=center, inner sep=0}, "G", curve={height=-32pt}, from=1-1, to=1-3]
	\arrow[""{name=1, anchor=center, inner sep=0}, "{G'}"', curve={height=32pt}, from=1-1, to=1-3]
	\arrow["\upsilon\,"', shorten <=6pt, shorten >=6pt, Rightarrow, from=0, to=1]
\end{tikzcd}
~~\stackrel{\smallbox}{\longmapsto}~~
\begin{tikzcd}
	{\C\smallbox\D} && {\C'\smallbox\D'}
	\arrow[""{name=0, anchor=center, inner sep=0}, "F\smallbox G", curve={height=-32pt}, from=1-1, to=1-3]
	\arrow[""{name=1, anchor=center, inner sep=0}, "{F'\smallbox G'}"', curve={height=32pt}, from=1-1, to=1-3]
	\arrow["\xi\smallbox\upsilon\,"', shorten <=6pt, shorten >=6pt, Rightarrow, from=0, to=1]
\end{tikzcd}
\end{equation}
which is defined by setting, for $f\in\C[U,U']$ and $g\in\D[V,V']$ :
\begin{equation}
(\xi\smallbox\upsilon)_{UV}:=\xi_U\boxtimes\upsilon_V\qquad,\qquad(\xi\smallbox\upsilon)_{f,g}:=
\xi_f\boxtimes G'(g)\upsilon_V + \xi_{U'}F(f)\boxtimes \upsilon_g\quad.
\end{equation}
\end{subequations}
Modifications also admit a monoidal composition; altogether, $\smallbox$ is an associative unital 3-functor hence $\dgCat^{[-1,0],\ps}$ is a monoidal tricategory. Furthermore, the symmetric braiding $\tau$ on $\Ch^{[-1,0]}$ provides a symmetric braiding on $\dgCat^{[-1,0],\ps}$. Lastly, we mention that $\dgCat^{[-1,0],\ps}$ is actually a \textit{closed} symmetric monoidal tricategory given that pseudonatural transformations and modifications can be added and scaled, while a modification $\Xi:\xi\Rrightarrow\xi'$ can be differentiated to a pseudonatural transformation $\partial(\Xi):=\xi-\xi'$.
\begin{rem}
By abuse of notation, we will often denote the lateral composition of modifications as $\Xi'\cdot\Xi=\Xi'+\Xi=\Xi+\Xi'$ and the reverse as $\overleftarrow{\Xi}=-\Xi$ even though those modifications have different (co)domains; the context will make it clear which is being used.
\end{rem}
We can now use the linearity of pseudonatural transformations together with the above three compositions to rewrite \eqref{subeq:left inf hex} as
\begin{subequations}\label{subeq:pre-index inf hex}
\begin{equation}\label{eq:pre-index t_U(VW)}
t*\Id_{\id_\C\smallbox\otimes}=\Id_\otimes*\big(t\smallbox\Id_{\id_\C}\big)+\Id_\otimes*\Big(\big[\gamma^{-1}\smallbox\Id_{\id_\C}\big]\big[(\Id_{\id_\C}\smallbox t)*\Id_{\tau_{\C,\C}\smallbox\id_\C}\big]\big[\gamma\smallbox\Id_{\id_\C}\big]\Big)~
\end{equation}
and \eqref{subeq:right inf hex} as
\begin{equation}\label{eq:pre-index t_(UV)W}
t*\Id_{\otimes\smallbox\id_\C}=\Id_\otimes*\big(\Id_{\id_\C}\smallbox t\big)+\Id_\otimes*\Big(\big[\Id_{\id_\C}\smallbox\gamma^{-1}\big]\big[(t\smallbox\Id_{\id_\C})*\Id_{\id_\C\smallbox\tau_{\C,\C}}\big]\big[\Id_{\id_\C}\smallbox\gamma\big]\Big)~.
\end{equation}
\end{subequations}
In fact, \eqref{subeq:pre-index inf hex} is the precise meaning behind the index notation \eqref{subeq:index inf hex}. Where possible, we make use of this much simpler index notation, e.g. consider the composable diagram 
\begin{subequations}
\begin{equation}
\begin{tikzcd}
	& {} && {} \\
	{\C\smallbox\C\smallbox\C} && {\C\smallbox\C} && {\C} \\
	& {} && {}
	\arrow["{\otimes\smallbox\id_\C}", curve={height=-30pt}, from=2-1, to=2-3]
	\arrow["{\otimes\smallbox\id_\C}"', curve={height=30pt}, from=2-1, to=2-3]
	\arrow[""{name=0, anchor=center, inner sep=0}, "{\otimes\smallbox\id_\C}"{description}, from=2-1, to=2-3]
	\arrow["{\otimes}", curve={height=-30pt}, from=2-3, to=2-5]
	\arrow["{\otimes}"', curve={height=30pt}, from=2-3, to=2-5]
	\arrow[""{name=1, anchor=center, inner sep=0}, "{\otimes}"{description}, from=2-3, to=2-5]
	\arrow["{\Id_{\otimes\smallbox\id_\C}\,}"', shorten <=7pt, shorten >=7pt, Rightarrow, from=1-2, to=0]
	\arrow["t\,"', shorten <=7pt, shorten >=7pt, Rightarrow, from=1-4, to=1]
	\arrow["{t\smallbox\Id_{\id_\C}\,}"', shorten <=7pt, shorten >=7pt, Rightarrow, from=0, to=3-2]
	\arrow["\Id_\otimes\,"', shorten <=7pt, shorten >=7pt, Rightarrow, from=1, to=3-4]
\end{tikzcd}\quad,
\end{equation}
Definition \ref{def:exchanger} tells us that the exchanger \eqref{eqn:compositioncoherences} takes on the specific form
\begin{equation}\label{eq:ast left 4T relationator}
*^2_{\Id_\otimes,\,t\smallbox\Id_{\id_\C}|\,t\,,\,\Id_{\otimes\smallbox\id_\C}}:t_{12}t_{(12)3}\Rrightarrow t_{(12)3}t_{12}\quad.
\end{equation}
Using the linearity of modifications, we can rewrite \eqref{eq:ast left 4T relationator} as the \textbf{left four-term relationator}
\begin{equation}
\L:[t_{12},t_{13}+t_{23}]\Rrightarrow0
\end{equation}
which has components
\begin{equation}\label{eq:L_123:=}
\L_{UVW}=t_{t_{UV},1_W}\quad.
\end{equation}
\end{subequations}
Similarly, we also have a \textbf{right four-term relationator}
\begin{subequations}
\begin{equation}
\R:[t_{23},t_{12}+t_{13}]\Rrightarrow0
\end{equation}
which has components 
\begin{equation}\label{eq:R_123:=}
\R_{UVW}=t_{1_U,t_{VW}}\quad.
\end{equation}
\end{subequations}
Using \eqref{eq:mod components are homotopies}, we see the specific way these modifications obstruct the four-term relations:
\begin{subequations}
\begin{align}
\partial(\L_{UVW})=&~(t_{UV}\otimes1_W)t_{(UV)W}-t_{(UV)W}(t_{UV}\otimes1_W)\quad,\label{eq:deformed left 4T relation}\\
\partial(\R_{UVW})=&~(1_U\otimes t_{VW})t_{U(VW)}-t_{U(VW)}(1_U\otimes t_{VW})\quad.\label{eq:deformed right 4T relation}
\end{align}
\end{subequations}
Given a symmetric infinitesimal 2-braiding $t$ on a symmetric strict monoidal $\Ch_\bbC^{[-1,0]}$-category $(\C,\otimes,I,\gamma)$ and a deformation parameter $\hbar$, one chooses an ansatz braiding as $\sigma=\gamma\,e^{i\pi\hbar t}$ and an ansatz associator as $\alpha=\Phi(t_{12},t_{23})$, where $\Phi$ is Drinfeld's KZ series (see \cite[Proposition XIX.6.4]{Kassel}, \cite[Theorem 20]{BRW} or Definition \ref{def:Drinfeld KZ series} below). As shown in \cite[Section 5]{Kem25a}, the four-term relationators $\L$ and $\R$ complicate the usual deformation quantisation story by obstructing the hexagon axiom already at second order in $\hbar$. Anticipating these obstructions forced us to go one level higher in category theory thus we introduced the definition \cite[Definition 2.25]{Kem25a} of a braided (strictly-unital) monoidal $\Ch^{[-1,0]}$-category $(\C,\otimes,I,\alpha,\sigma,\Pi,\H^L,\H^R)$ by specifying the definition of a braided monoidal bicategory (as in \cite[Definition C.2]{Schommer} or \cite[Definition 12.1.6]{Yau}) to our context provided by $\dgCat^{[-1,0],\ps}$. In particular, the associator $\alpha$ and braiding $\sigma$ do not satisfy the usual pentagon and hexagon axioms, instead these are obstructed by the \textbf{pentagonator} $\Pi$ and \textbf{hexagonator} $\H^{L/R}$ modifications, i.e.:
\begin{subequations}\label{subeq:pentagonator and hexagonators}
\begin{alignat}{6}
\Pi~:&~\alpha_{234}\,\alpha_{1(23)4}\,\alpha_{123}&&\xRrightarrow{~~~}\alpha_{12(34)}\,\alpha_{(12)34}\quad&&&,\label{eq:index pentagonator}\\
\H^L:&\quad\alpha_{231}\,\sigma_{1(23)}\,\alpha&&\xRrightarrow{~~~}~~\sigma_{13}\,\alpha_{213}\,\sigma_{12}&&&,\label{eq:index left hexagonator}\\
\H^R:&\quad\alpha^{-1}_{312}\,\sigma_{(12)3}\,\alpha^{-1}&&\xRrightarrow{~~~}~~\sigma_{13}\,\alpha^{-1}_{132}\,\sigma_{23}&&&.\label{eq:index right hexagonator}
\end{alignat}
\end{subequations}
The data \eqref{subeq:pentagonator and hexagonators} is subject to five higher coherence conditions, all of which state that their associated expression must vanish: $$~$$
(i) \textbf{Associahedron},
\begin{subequations}\label{subeq:axioms of bra mon 2cat}
\begin{align*}
&\Big(\alpha_{345}\Pi_{12(34)5}+\alpha_{\alpha_{345}}\alpha_{(12)(34)5}\Big)\alpha_{(12)34}+\Big(\alpha_{345}\alpha_{2(34)5}\alpha_{\alpha_{234}}-\Pi_{2345}\alpha_{1((23)4)5}\Big)\alpha_{1(23)4}\alpha_{123}\\&+\alpha_{345}\alpha_{2(34)5}\alpha_{1(2(34))5}\Pi_{1234}+\alpha_{23(45)}\Big(\alpha_{1(23)(45)}\alpha_{\alpha_{123}}-\Pi_{1(23)45}\alpha_{123}\Big)-\Pi_{123(45)}\alpha_{((12)3)45}\\&+\alpha_{12(3(45))}\Pi_{(12)345}\quad.\numberthis\label{eq:associahedron}
\end{align*}
(ii) \textbf{Left tetrahedron}, 
\begin{align*}
&\Big(\left[\alpha_{341}\alpha_{2(34)1}\sigma_{\alpha_{234}}-\Pi_{2341}\sigma_{1((23)4)}\right]\alpha_{1(23)4}+\left[\sigma_{14}\Pi_{2314}+\alpha_{\sigma_{14}}\alpha_{(23)14}\right]\sigma_{1(23)}\Big)\alpha_{123}\\&-\alpha_{23(41)}\H^L_{1(23)4}\alpha_{123}+\left(\sigma_{14}\alpha_{314}\alpha_{\sigma_{13}}+\H^L_{134}\alpha_{2(13)4}\right)\alpha_{213}\sigma_{12}-\sigma_{14}\alpha_{314}\alpha_{2(31)4}\H^L_{123}\\&+\alpha_{341}\left(\H^L_{12(34)}\alpha_{(12)34}+\sigma_{1(34)}\left[\alpha_{21(34)}\alpha_{\sigma_{12}}-\Pi_{2134}\sigma_{12}\right]+\alpha_{2(34)1}\sigma_{1(2(34))}\Pi\right)\quad.\numberthis\label{eq:left tetrahedron}
\end{align*}
(iii) \textbf{Right tetrahedron},
\begin{align*}
&\Big(\Big[\alpha^{-1}_{412}\alpha^{-1}_{4(12)3}\sigma_{\alpha^{-1}_{123}}-\Pi^{-1}_{4123}\sigma_{(1(23))4}\Big]\alpha^{-1}_{1(23)4}+\Big[\sigma_{14}\Pi^{-1}_{1423}+\alpha^{-1}_{\sigma_{14}}\alpha^{-1}_{14(23)}\Big]\sigma_{(23)4}\Big)\alpha^{-1}_{234}\\&+\alpha^{-1}_{(41)23}\H^R_{1(23)4}\alpha^{-1}_{234}+\sigma_{14}\alpha^{-1}_{142}\Big(\alpha^{-1}_{\sigma_{24}}\alpha^{-1}_{243}\sigma_{34}+\alpha^{-1}_{1(42)3}\H^R_{234}\Big)+\H^R_{124}\alpha^{-1}_{1(24)3}\alpha^{-1}_{243}\sigma_{34}\\&+\alpha^{-1}_{412}\Big(\H^R_{(12)34}\alpha^{-1}_{12(34)}+\sigma_{(12)4}\Big[\alpha^{-1}_{(12)43}\alpha^{-1}_{\sigma_{34}}-\Pi^{-1}_{1243}\sigma_{34}\Big]+\alpha^{-1}_{4(12)3}\sigma_{((12)3)4}\Pi^{-1}_{1234}\Big)\quad.\numberthis\label{eq:right tetrahedron}
\end{align*}
(iv) \textbf{Hexahedron},
\begin{align*}
&\sigma_{14}\alpha^{-1}_{142}\sigma_{24}\alpha_{31(24)}\Big(\alpha_{\sigma_{13}}\alpha^{-1}_{132}\sigma_{23}-\alpha_{(31)24}\H^R_{123}\Big)-\alpha^{-1}_{412}\Big(\Pi_{3412}\alpha^{-1}_{(34)12}\sigma_{(12)(34)}\alpha_{(12)34}+\H^L_{(12)34}\Big)\alpha^{-1}_{123}\\&+\alpha_{3(41)2}\alpha_{341}\Big(\sigma_{1(34)}\alpha_{134}\Big[\alpha^{-1}_{(13)42}\alpha^{-1}_{13(42)}\H^L_{234}+\Pi^{-1}_{1342}\alpha_{342}\sigma_{2(34)}\alpha_{234}\Big]\alpha_{1(23)4}+\H^R_{12(34)}\alpha_{234}\alpha_{1(23)4}\\&-\alpha^{-1}_{(34)12}\sigma_{(12)(34)}\alpha^{-1}_{12(34)}\Pi_{1234}\alpha^{-1}_{123}\Big)-\Big(\alpha^{-1}_{412}\sigma_{(12)4}\alpha^{-1}_{124}\Pi_{3124}+\H^R_{124}\alpha_{31(24)}\alpha_{(31)24}\Big)\alpha^{-1}_{312}\sigma_{(12)3}\alpha^{-1}_{123}\\&+\Big(\Big[\sigma_{14}\alpha^{-1}_{142}\Pi_{3142}-\alpha_{\sigma_{14}}\alpha_{314}\Big]\alpha^{-1}_{(31)42}\sigma_{24}-\sigma_{14}\alpha^{-1}_{142}\alpha_{\sigma_{24}}\Big)\sigma_{13}\alpha_{(13)24}\alpha^{-1}_{132}\sigma_{23}\\&+\alpha_{3(41)2}\Big(\sigma_{14}\alpha_{314}\Big[\alpha^{-1}_{\sigma_{13}}\sigma_{24}\alpha^{-1}_{13(24)}-\sigma_{13}\alpha^{-1}_{(13)42}\alpha^{-1}_{\sigma_{24}}\Big]+\H^L_{134}\alpha^{-1}_{(13)42}\alpha^{-1}_{13(42)}\sigma_{24}\Big)\alpha_{324}\sigma_{23}\alpha_{1(23)4}\\&+\alpha_{3(41)2}\sigma_{14}\alpha_{314}\alpha^{-1}_{(31)42}\sigma_{13}\sigma_{24}\alpha^{-1}_{13(24)}\Big(\Pi_{1324}\alpha^{-1}_{132}\sigma_{23}+\alpha_{324}\alpha_{\sigma_{23}}\Big)\qquad.\numberthis\label{eq:Hexahedron}
\end{align*}
(v) \textbf{Breen polytope},
\begin{equation}\label{eq:Breen axiom index version}
\sigma_{\sigma_{12}}+\alpha_{321}\Big(\H^R_{213}\alpha_{213}\sigma_{12}-\sigma_{23}\alpha_{231}^{-1}\H^L+\sigma_{\sigma_{23}}\alpha\Big)+\Big(\H^L_{132}\alpha_{132}^{-1}\sigma_{23}-\sigma_{12}\alpha_{312}\H^R\Big)\alpha\quad.
\end{equation}
\end{subequations}
As shown in \cite[Proposition 5.14]{Kem25a}, the ansatz associator
\begin{equation}\label{eq:alpha at 2nd order}
\alpha=\Phi(t_{12},t_{23})=1-\tfrac{1}{6}\pi^2\hbar^2[t_{12},t_{23}]+\O\big(\hbar^3\big)
\end{equation}
satisfies the pentagon axiom up to and including order $\hbar^2$ thus we can choose a vanishing pentagonator, doing so satisfies the associahedron axiom \eqref{eq:associahedron} up to and including order $\hbar^2$. Substituting the ansatz associator \eqref{eq:alpha at 2nd order} together with the ansatz braiding
\begin{equation}
\sigma=\gamma e^{i\pi\hbar t}=\gamma\Big(1+i\pi\hbar t-\tfrac{1}{2}\pi^2\hbar^2t^2+\O\big(\hbar^3\big)\Big)
\end{equation}
into \eqref{subeq:pentagonator and hexagonators} gives, for a \textit{symmetric} infinitesimal 2-braiding:
\begin{subequations}\label{subeq:hexagonators at 2nd order}
\begin{alignat}{6}
-\tfrac{1}{6}\pi^2\hbar^2\gamma_{1(23)}(2\L+\R)+\O\big(\hbar^3\big):~&\alpha_{231}\,\sigma_{1(23)}\,\alpha&&\xRrightarrow{~~~}\sigma_{13}\,\alpha_{213}\,\sigma_{12}\quad&&&,\\
-\tfrac{1}{6}\pi^2\hbar^2\gamma_{(12)3}(\L+2\R)+\O\big(\hbar^3\big):~&\alpha^{-1}_{312}\,\sigma_{(12)3}\,\alpha^{-1}&&\xRrightarrow{~~~}\sigma_{13}\,\alpha^{-1}_{132}\,\sigma_{23}&&&.
\end{alignat}
\end{subequations}
As shown in \cite[Section 5.2]{Kem25a}, the modifications \eqref{subeq:hexagonators at 2nd order} will not necessarily satisfy the four axioms \eqref{eq:left tetrahedron}-\eqref{eq:Breen axiom index version} but they will if the symmetric infinitesimal 2-braiding $t$ satisfies some extra conditions discovered by Cirio and Martins \cite{Joao}.
\begin{defi}\label{def:coherent t}
Given a symmetric strict monoidal $\Ch^{[-1,0]}$-category $(\C,\otimes,I,\gamma)$, a \textbf{coherent} infinitesimal 2-braiding $t$ satisfies, for all $U,V,W\in\C$,
\begin{equation}
-(\gamma_{VU}\otimes1_W)\R_{VUW}(\gamma_{UV}\otimes1_W)=\L_{UVW}+\R_{UVW}=-(1_U\otimes\gamma_{WV})\L_{UWV}(1_U\otimes\gamma_{VW})\quad.
\end{equation}
A symmetric infinitesimal 2-braiding $t$ is \textbf{totally symmetric} if, for all $U,V,W\in\C$,
\begin{equation}
t_{\gamma_{UV},1_W}=0\quad.
\end{equation}
\end{defi}
Knowing that these properties are sufficient to solve the deformation quantisation problem at order $\hbar^2$, we assume a coherent totally symmetric infinitesimal 2-braiding $t$ and look for modifications of the form:
\begin{subequations}
\begin{equation}\label{eq:symstr pentagonator}
\Pi:\Phi(t_{23},t_{34})\Phi(t_{12}+t_{13},t_{24}+t_{34})\Phi(t_{12},t_{23})\xRrightarrow{~~~}\Phi(t_{12},t_{23}+t_{24})\Phi(t_{13}+t_{23},t_{34})
\end{equation}
and
\begin{equation}\label{eq:symstr right pre-hex}
R:\Phi(t_{12}\,,t_{13})e^{i\pi\hbar t_{(12)3}}\Phi(t_{23}\,,t_{12})\xRrightarrow{~~~}e^{i\pi\hbar t_{13}}\Phi(t_{23}\,,t_{13})e^{i\pi\hbar t_{23}}\quad.
\end{equation}
As in \cite[Remark 5.22]{Kem25a}, a totally symmetric infinitesimal 2-braiding gives us 
\begin{equation}\label{eq:symstr left pre-hex}
R_{321}:\Phi(t_{23},t_{13})e^{i\pi\hbar t_{1(23)}}\Phi(t_{12},t_{23})\xRrightarrow{~~~}e^{i\pi\hbar t_{13}}\Phi(t_{12},t_{13})e^{i\pi\hbar t_{12}}
\end{equation}
\end{subequations}
thus we define candidate hexagonators as $\H^R:=\gamma_{(12)3}R\,$ and $\H^L:=\gamma_{1(23)}L$, where $L:=R_{321}$. 

\section{The fundamental conjecture}\label{subsec:conjecture}
This section introduces the notion of Drinfeld-Kohno 2-algebras in Definition \ref{def:nth DK 2-algebra} so that we may state our fundamental conjecture. As explained in Remark \ref{rem:conjecture is fundamental}, should Conjecture \ref{conj:fundamental} be true then the latter two sections offer a self-contained solution to the problem of integrating infinitesimal 2-braidings to provide a concrete braided monoidal 2-category. In other words, the candidate hexagonator series of Theorem \ref{theo:hexagonator} and the candidate pentagonator series of Theorem \ref{theo:pentagonator} automatically satisfy their axioms \eqref{subeq:axioms of bra mon 2cat} if Conjecture \ref{conj:fundamental} is true.
\sk

See \cite[Definition 2.5]{Chen} for the following definition.
\begin{defi}\label{def:associative 2-algebra}
An \textbf{associative 2-algebra} consists of three pieces of data:
\begin{enumerate}
\item[(i)] A pair of associative algebras $A$ and $B$.
\item[(ii)] An algebra homomorphism $\partial:B\to A$.
\item[(iii)] An $A$-bimodule structure on $B$, i.e. for all $a,a'\in A$ and $b\in B$,
\begin{equation}
(a'a)b=a'(ab)\qquad,\qquad(a'b)a=a'(ba)\qquad,\qquad(ba')a=b(a'a)\quad.
\end{equation}
\end{enumerate}
These three pieces of data are required to satisfy the following two axioms:
\begin{enumerate}
\begin{subequations}\label{subeq:axioms of assoc 2-algebra}
\item \textbf{Two-sided $A$-equivariance} of $\partial$, i.e. for all $a\in A$ and $b\in B$,
\begin{equation}
\partial(ab)=a\partial(b)\qquad,\qquad\partial(ba)=\partial(b)a\quad.
\end{equation}
\item The \textbf{Peiffer identity}, i.e. for all $b,b'\in B$,
\begin{equation}
\partial(b')b=b'b=b'\partial(b)\quad.
\end{equation}
\end{subequations}
\end{enumerate}
\end{defi}
Analogous to \cite[Construction 3.22]{Kem25b}, given a pair of $\Ch^{[-1,0]}$-categories $\BB$ and $\C$ together with a $\Ch^{[-1,0]}$-functor $F:\BB\to\C$, we define an associative 2-algebra $\End_F$ as follows:
\begin{enumerate}
\item[(i)] Consider the associative algebras of pseudonatural transformations of the form $\xi:F\Rightarrow F$ and modifications of the form $\Xi:\xi\Rrightarrow0:F\Rightarrow F$ with multiplication given by the vertical composition.
\item[(ii)] Given $\Xi:\xi\Rrightarrow0:F\Rightarrow F$, setting $\partial(\Xi):=\xi$ obviously defines an algebra homomorphism.
\item[(iii)] The modifications form a bimodule over the pseudonatural transformations via whiskering. 
\end{enumerate}
The above data evidently satisfies the axioms \eqref{subeq:axioms of assoc 2-algebra}. 
\begin{ex}\label{ex:End_otimes^n}
Given a natural number $n\in\bbN$ and a symmetric strict monoidal $\Ch^{[-1,0]}$-category $(\C,\otimes,I,\gamma)$, we set $\BB=\C^{\boxtimes(n+1)}$ and $F=\otimes^n:\C^{\boxtimes(n+1)}\to\C$ thus the associative 2-algebra $\End_{\otimes^n}$ consists of pseudonatural transformations of the form $\xi:\otimes^n\Rightarrow\otimes^n$ and modifications of the form $\Xi:\xi\Rrightarrow0:\otimes^n\Rightarrow\otimes^n$.
\end{ex}
Let us consider the special case $n=2$ of Example \ref{ex:End_otimes^n} and suppose we are given an infinitesimal 2-braiding $t$. Consider the modification
\begin{equation}\label{eq:nonvanishing modification}
\L_{213}-\L:[t_{21},t_{23}+t_{13}]-[t_{12},t_{13}+t_{23}]\xRrightarrow{~~~}0\quad,
\end{equation}
if our infinitesimal 2-braiding $t$ is symmetric then $t_{21}=t_{12}$ and the domain of \eqref{eq:nonvanishing modification} is 0 yet, for $U,V,W\in\C$,
\begin{equation}
(\gamma_{VU}\otimes1_W)t_{t_{VU},1_W}(\gamma_{UV}\otimes1_W)\neq t_{t_{UV},1_W}\quad,
\end{equation}
in general. Conversely, if $t$ is totally symmetric then \cite[Lemma 5.21]{Kem25a} gives us:
\begin{align}
\L=\R_{312}=\L_{213}=\R_{321}\quad,\quad\R=\L_{231}=\R_{132}=\L_{321}\quad,\quad\L_{132}=\R_{213}=\L_{312}=\R_{231}
\end{align}
while the modification 
\begin{equation}
\L+\R+\L_{132}:[t_{12},t_{13}+t_{23}]+[t_{23},t_{12}+t_{13}]+[t_{13},t_{12}+t_{32}]\xRrightarrow{~~~}0
\end{equation}
also has domain 0 yet does not vanish unless $t$ is, further, coherent. Let us now turn our attention to the special case $n=3$ of Example \ref{ex:End_otimes^n}. 
\begin{lem}\label{lem:5 relations}
Given an infinitesimal 2-braiding $t$ on a symmetric strict monoidal $\Ch^{[-1,0]}$-category $(\C,\otimes,I,\gamma)$, we have the following five relations:
\begin{subequations}\label{subeq:5 relations}
\begin{alignat}{6}
[t_{(123)4},\R_{123}]-[t_{1(23)},\L_{234}]+[t_{23},\L_{1(23)4}]=&~0&&,\label{eq:1/5 relation}\\
[t_{1(234)},\R_{234}]+[t_{34},\R_{12(34)}]-[t_{2(34)},\R_{134}]=&~0&&,\label{eq:2/5 relation}\\
[t_{(123)4},\L_{123}]+[t_{12},\L_{(12)34}]-[t_{(12)3},\L_{124}]=&~0\quad&&,\label{eq:3/5 relation}\\
[t_{1(234)},\L_{234}]+[t_{23},\R_{1(23)4}]-[t_{(23)4},\R_{123}]=&~0&&,\label{eq:4th of 5 relations}\\
[t_{12},\R_{(12)34}]-[t_{34},\L_{12(34)}]=&~0&&.\label{eq:5th of 5 relations}
\end{alignat}
\end{subequations}
\end{lem}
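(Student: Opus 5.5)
The plan is to verify each of the five identities componentwise in $\End_{\otimes^3}$. There are three ingredients. First, the structure of $\L$ and $\R$ from \eqref{eq:L_123:=} and \eqref{eq:R_123:=}. Second, the pseudonaturality axioms of Definition \ref{def:pseudonatural} for $t$. Third, the infinitesimal hexagon relations \eqref{subeq:left inf hex} and \eqref{subeq:right inf hex} for the homotopy component $t_{f,g}$.

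Throughout, a bracket $[t_X,\Xi]$ means the difference of the two whiskerings $t_X\Xi-\Xi t_X$, a modification $0\Rrightarrow0$ in $\End_{\otimes^3}$. Fix objects $U,V,W,X$. Each term becomes an explicit degree $-1$ morphism in $\C[UVWX,UVWX]$ built from $t_{UV}$, $t_{f,g}$ and $\gamma$. For instance $\L_{1(23)4}=t_{t_{U(VW)},1_X}$ and $\R_{12(34)}=t_{1_{UV},t_{WX}}$.

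The basic mechanism is the product rule \eqref{eqn:dubindex splits prods} for the homotopy part of $t$, viewed as a pseudonatural transformation $\otimes\Rightarrow\otimes$ on $\C\smallbox\C$. It is applied to two factorisations of the same morphism in $\C\smallbox\C$.

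Relation \eqref{eq:5th of 5 relations} shows the mechanism most directly. Take $(f,1)(1,g)=(1,g)(f,1)$ with $f=t_{UV}$ and $g=t_{WX}$. Expanding $t_{(f,1)(1,g)}$ and $t_{(1,g)(f,1)}$ by \eqref{eqn:dubindex splits prods} gives
\begin{equation*}
t_{f,1}(1\otimes g)+(f\otimes1)t_{1,g}=t_{1,g}(f\otimes 1)+(1\otimes g)t_{f,1}.
\end{equation*}
This is exactly $[t_{12},\R_{(12)34}]=[t_{34},\L_{12(34)}]$.

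Relations \eqref{eq:1/5 relation}--\eqref{eq:4th of 5 relations} need one more step. For \eqref{eq:3/5 relation}, consider $\L_{(12)34}=t_{t_{(UV)W},1_X}$. Decompose $t_{(UV)W}=1_U\otimes t_{VW}+(1\otimes\gamma)(t_{UW}\otimes1)(1\otimes\gamma)$ via \eqref{eq:t_(UV)W}. Then apply the product rule to the composite $(t_{12}\otimes1_X)\,t_{(12)3}$ and to $t_{(12)3}\,(t_{12}\otimes1_X)$; their difference is controlled by $\partial\L_{123}$ through \eqref{eq:deformed left 4T relation}. Next, use the hexagon formula \eqref{eq:t_(fg)h} to expand $t_{f\otimes g,h}$ when the first slot is of tensor form. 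Where the first slot is a composite with a symmetry $\gamma$, use $t_{\gamma,1}$-type terms together with the product rule.

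The remaining ingredient is the homotopy axiom \eqref{eq:dubindex is homotopy} applied to a degree $-1$ morphism $h$. Since $t_h$ would have degree $-2$, which vanishes in the truncated setting, this gives
\begin{equation*}
(h\otimes 1)\,t-t\,(h\otimes1)=t_{\partial h,1}.
\end{equation*}
Applying this with $h=\L_{123}$ or $h=\R_{123}$ turns the terms $[t_{(123)4},\L_{123}]$ into $t_{\partial\L_{123},1_X}$. Then $\partial\L_{123}$ is rewritten as a commutator of degree-0 morphisms via \eqref{eq:deformed left 4T relation}, and the product rule splits it into the other two brackets. Relations \eqref{eq:1/5 relation}, \eqref{eq:2/5 relation} and \eqref{eq:4th of 5 relations} follow by the same procedure. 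They are the mirror images obtained by exchanging left and right, or by applying \eqref{eq:t_f(gh)} instead of \eqref{eq:t_(fg)h}.

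The main obstacle I expect is the bookkeeping of the $\gamma$-conjugated summands that the hexagon relations introduce. These must cancel pairwise, using only naturality of $\gamma$ and strict monoidality. No symmetry of $t$ is assumed, so I would be careful not to invoke $t_{\gamma,1}=0$. I would first check \eqref{eq:3/5 relation} in full detail and then obtain the others by the evident left/right and index symmetries of the argument.
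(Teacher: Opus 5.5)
Your proposal is correct and is essentially the paper's proof: for \eqref{eq:1/5 relation}--\eqref{eq:4th of 5 relations} you apply the homotopy axiom to a degree $-1$ morphism such as $h=\R_{UVW}$, with the truncation killing $t_{h,1_X}$. You then expand $t_{\partial(h),1_X}$ (or $t_{1_U,\partial(h)}$) using the deformed four-term relation, the product rule and \eqref{eq:t_(fg)h} or \eqref{eq:t_f(gh)}, while \eqref{eq:5th of 5 relations} comes from the two factorisations of $t_{t_{UV},t_{WX}}$.

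The decomposition of $t_{(UV)W}$ via \eqref{eq:t_(UV)W} and the anticipated $\gamma$-bookkeeping are unnecessary. The product rule yields $t_{t_{(UV)W},1_X}=\L_{(12)34}$ directly, and the only $\gamma$-conjugated summand, arising from $t_{t_{UV}\otimes1_W,1_X}$, is by definition $\L_{124}$; the other summand vanishes because $t_{1_W,1_X}=0$. Note also that $t_{1_{UV},t_{WX}}$ is $\R_{(12)34}$, not $\R_{12(34)}$.
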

\begin{proof}
We first prove \eqref{eq:1/5 relation}, for $U,V,W,X\in\C$,
\begin{align}\label{eq:2-deformed left 4T relation}
(\R_{UVW}\otimes1_X)t_{(UVW)X}-t_{(UVW)X}(\R_{UVW}\otimes1_X)\overset{\eqref{eq:dubindex is homotopy}}{=}\partial(t_{\R_{UVW},1_X})+t_{\partial(\R_{UVW}),1_X}
\end{align}
but the truncation annihilates $t_{\R_{UVW},1_X}$ thus we rewrite the RHS of \eqref{eq:2-deformed left 4T relation} as
\begin{align*}
t_{\partial(\R_{UVW}),1_X}\overset{\eqref{eq:deformed right 4T relation}}{=}\quad&t_{(1_U\otimes t_{VW})t_{U(VW)},1_X}-t_{t_{U(VW)}(1_U\otimes t_{VW}),1_X}\\
\overset{\eqref{eqn:dubindex splits prods}}{=}\quad\,&t_{1_U\otimes t_{VW},1_X}(t_{U(VW)}\otimes1_X)+(1_U\otimes t_{VW}\otimes1_X)t_{t_{U(VW)},1_X}\\&-t_{t_{U(VW)},1_X}(1_U\otimes t_{VW}\otimes1_X)-(t_{U(VW)}\otimes1_X)t_{1_U\otimes t_{VW},1_X}\\
\overset{\eqref{eq:t_(fg)h},\eqref{eq:L_123:=}}{=}&[1_U\otimes\L_{VWX},t_{U(VW)}\otimes1_X]-[\L_{U(VW)X},1_U\otimes t_{VW}\otimes1_X]\quad.\numberthis
\end{align*}
The proof of \eqref{eq:2/5 relation} is the same but uses the \eqref{eq:t_f(gh)} instead of \eqref{eq:t_(fg)h}; likewise, the proofs of \eqref{eq:3/5 relation} and \eqref{eq:4th of 5 relations} are the same but make use of the deformed left four-term relations \eqref{eq:deformed left 4T relation} instead of the deformed right four-term relations \eqref{eq:deformed right 4T relation}. Lastly, we prove \eqref{eq:5th of 5 relations}, 
\begin{align*}
t_{t_{UV},1_{WX}}(1_{UV}\otimes t_{WX})+(t_{UV}\otimes1_{WX})t_{1_{UV},t_{WX}}\overset{\eqref{eqn:dubindex splits prods}}{=}&t_{t_{UV},t_{WX}}\\
\overset{\eqref{eqn:dubindex splits prods}}{=}&t_{1_{UV},t_{WX}}(t_{UV}\otimes1_{WX})\\&+(1_{UV}\otimes t_{WX})t_{t_{UV},1_{WX}}\quad.\numberthis
\end{align*}
\end{proof}
We now reprove Cirio and Martins' result \cite[Theorems 21 and 22]{Joao} that coherent totally symmetric infinitesimal 2-braidings satisfy six categorified relations that replace the four-term relations \eqref{eq:four-term relations}. 
\begin{cor}
If the infinitesimal 2-braiding of Lemma \ref{lem:5 relations} is coherent and totally symmetric then we have the following six relations:
\begin{subequations}\label{subeq:again rewritten 5 relations}
\begin{alignat}{6}
[t_{(123)4},\R_{123}]-[t_{1(23)},\L_{234}]+[t_{23},\L_{124}+\L_{134}]=&~0&&,\label{eq:again 1st relation}\\
[t_{1(234)},\R_{234}]+[t_{34},\R_{123}+\R_{124}]-[t_{2(34)},\R_{134}]=&~0&&,\label{eq:again 2nd relation}\\
[t_{(123)4},\L_{123}]+[t_{12},\L_{134}+\L_{234}]-[t_{(12)3},\L_{124}]=&~0\quad&&,\label{eq:again 3rd relation}\\
[t_{1(234)},\L_{234}]+[t_{23},\R_{124}+\R_{134}]-[t_{(23)4},\R_{123}]=&~0&&,\label{eq:again 4th relation}\\
[t_{12},\R_{134}+\R_{234}]-[t_{34},\L_{123}+\L_{124}]=&~0&&,\label{eq:again 5th relation}\\
[t_{13},\R_{124}-\L_{234}-\R_{234}]+[t_{24},\L_{123}+\R_{123}-\L_{134}]=&~0&&.\label{eq:Theorem 22 of CM}
\end{alignat}
\end{subequations}
\end{cor}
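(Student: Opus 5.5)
The plan is to take the five relations of Lemma \ref{lem:5 relations} and rewrite every relationator with a ``composite'' index, such as $\L_{1(23)4}$, $\R_{12(34)}$, $\L_{(12)34}$, $\R_{1(23)4}$ and $\R_{(12)34}$, as a sum of relationators with simple indices. This gives \eqref{eq:again 1st relation}--\eqref{eq:again 5th relation}. The sixth relation \eqref{eq:Theorem 22 of CM} will then come from combining relations obtained from these by permuting indices with the symmetry $\gamma$.

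\textbf{Step 1: splitting the composite relationators.} We use $\L_{UVW}=t_{t_{UV},1_W}$ and $\R_{UVW}=t_{1_U,t_{VW}}$. For $\L_{U(VW)X}=t_{t_{U(VW)},1_X}$, first expand $t_{U(VW)}$ by \eqref{eq:t_U(VW)} as $t_{UV}\otimes1_W+(\gamma\otimes1)(1\otimes t_{UW})(\gamma\otimes1)$. Then apply linearity of $f\mapsto t_{f,1_X}$ together with \eqref{eqn:dubindex splits prods}. The cross terms involve $t_{\gamma_{VU}\otimes 1,1_X}$, and these vanish by total symmetry: by \eqref{eq:t_(fg)h}, $t_{f\otimes g,h}$ reduces to terms containing $t_{\gamma,1}$ or $t_{1,1}=0$. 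What remains is $t_{t_{UV}\otimes1_W,1_X}$ and a conjugated $t_{1_V\otimes t_{UW},1_X}$. Expanding each with \eqref{eq:t_(fg)h} gives $\L_{124}$ and a $\gamma$-conjugate of $\R$-type terms on indices $1,3,4$.

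At this point coherence (Definition \ref{def:coherent t}) and the identities of \cite[Lemma 5.21]{Kem25a} are used to identify that conjugate as $\L_{134}$. The other composites are handled the same way, and the outcome in index notation is
\[
\L_{1(23)4}=\L_{124}+\L_{134},\quad \R_{12(34)}=\R_{123}+\R_{124},\quad \L_{(12)34}=\L_{134}+\L_{234},
\]
\[
\R_{1(23)4}=\R_{124}+\R_{134},\quad \R_{(12)34}=\R_{134}+\R_{234},\quad \L_{12(34)}=\L_{123}+\L_{124}.
\]
Substituting these into \eqref{eq:1/5 relation}--\eqref{eq:5th of 5 relations} gives \eqref{eq:again 1st relation}--\eqref{eq:again 5th relation} directly.

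\textbf{Step 2: the sixth relation.} I would apply the permutation action of $\gamma$ to the five relations, for example exchanging $2\leftrightarrow3$ in \eqref{eq:again 5th relation}, or permuting in \eqref{eq:again 1st relation} and \eqref{eq:again 3rd relation}. The permuted relationators are rewritten using the dictionary $\L=\R_{312}=\L_{213}=\ldots$ of \cite[Lemma 5.21]{Kem25a} and the coherence relation $\L+\R+\L_{132}=0$. The permuted $t$'s are rewritten using symmetry, $t_{ij}=t_{ji}$, and \eqref{subeq:index inf hex}, e.g. $t_{(123)4}=t_{14}+t_{24}+t_{34}$.

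After fully expanding in terms of $t_{ij}$, all terms other than those with $t_{13}$ and $t_{24}$ should cancel in a suitable linear combination. I expect the combination to be a permuted copy of \eqref{eq:again 5th relation} minus the original, possibly corrected by \eqref{eq:again 1st relation}--\eqref{eq:again 4th relation}. The leftover is then \eqref{eq:Theorem 22 of CM}.

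\textbf{Main obstacle.} The hardest part is this bookkeeping in Step 2: choosing the permutations and the linear combination so that everything outside $t_{13}$ and $t_{24}$ cancels. A secondary difficulty is tracking the $\gamma$-conjugations correctly in Step 1, so that coherence is applied with the right signs. I would first try subtracting \eqref{eq:again 5th relation} from its image under $2\leftrightarrow3$, and only add other relations if residual terms remain.
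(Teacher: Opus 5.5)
Your route is the paper's: substitute the splittings $\L_{12(34)}=\L_{123}+\L_{124}$, $\L_{1(23)4}=\L_{124}+\L_{134}$, $\L_{(12)34}=\L_{134}+\L_{234}$ (and the analogous ones for $\R$) into Lemma \ref{lem:5 relations}, then get \eqref{eq:Theorem 22 of CM} from the $(2\leftrightarrow3)$ image of \eqref{eq:again 5th relation} together with coherence. Two details of your execution are wrong, however.

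\textbf{Step 1.} No $\R$-type term arises, and coherence is not used. By \eqref{eq:t_(fg)h} and $t_{1_V,1_X}=0$ you get $t_{1_V\otimes t_{UW},1_X}=1_V\otimes t_{t_{UW},1_X}=1_V\otimes\L_{UWX}$. Its conjugate by $\gamma_{VU}\otimes1_W\otimes1_X$ is $\L_{134}$ by the definition of the index notation. So the splittings follow from total symmetry alone. In the $\R$ cases you also need $\gamma$-equivariance of $t$ to kill $t_{1_U,\gamma}$. This is why the paper attributes the first five relations to total symmetry only; coherence is needed only for the sixth.

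\textbf{Step 2.} No linear combination is needed, and your first attempt, subtracting the original relation, would not give the target. The $(2\leftrightarrow3)$ image of \eqref{eq:again 5th relation} is
\[
[t_{13},\R_{124}+\R_{324}]-[t_{24},\L_{132}+\L_{134}]=0\quad,
\]
which already involves only $t_{13}$ and $t_{24}$. Coherence (Definition \ref{def:coherent t}) reads $-\R_{213}=\L+\R=-\L_{132}$. It gives $\L_{132}=-\L_{123}-\R_{123}$, and, relabelled onto the factors $2,3,4$, $\R_{324}=-\L_{234}-\R_{234}$. Substituting these gives exactly \eqref{eq:Theorem 22 of CM}. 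If you instead subtracted \eqref{eq:again 5th relation}, the terms $[t_{12},\R_{134}+\R_{234}]$ and $[t_{34},\L_{123}+\L_{124}]$ would survive with nothing to cancel them.
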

\begin{proof}
As in \cite[Lemma 5.23]{Kem25a}, a totally symmetric infinitesimal 2-braiding $t$ gives us:
\begin{equation}\label{subeq:L is primitive}
\L_{12(34)}=\L_{123}+\L_{124}\quad,\quad\L_{1(23)4}=\L_{124}+\L_{134}\quad,\quad\L_{(12)34}=\L_{134}+\L_{234}\quad,
\end{equation}
and likewise for $\R$ . Thus the first 5 relations in \eqref{subeq:again rewritten 5 relations} come from the total symmetry of $t$. The last relation \eqref{eq:Theorem 22 of CM} comes from applying the permutation $(2\leftrightarrow3)$ to \eqref{eq:again 5th relation} and using the fact that $t$ is coherent.
\end{proof}
The above (together with `disjoint-commutativity', e.g. \cite[(5.18c)]{Kem25a}) motivates the following definition. 
\begin{defi}\label{def:nth DK 2-algebra}
For $n\in\bbN$, the \textbf{$n^\textbf{th}$ Drinfeld-Kohno 2-algebra} is the associative 2-algebra generated by
\begin{equation}
\Big\{a_{ij}\in A\,,\,\ell_{ijk}\in B\,,\,r_{ijk}\in B\,\Big|\,1\leq i<j<k\leq n+1\Big\}
\end{equation}
such that 
\begin{equation}
\partial\big(\ell_{ijk}\big)=\big[a_{ij},a_{ik}+a_{jk}\big]\qquad,\qquad\partial\big(r_{ijk}\big)=\big[a_{jk},a_{ij}+a_{ik}\big]
\end{equation}
and subject to the relations:
\begin{enumerate}
\item[(i)] For $1\leq i<j<k<l\leq n+1$,
\begin{subequations}
\begin{alignat}{6}
\big[a_{il}+a_{jl}+a_{kl},r_{ijk}\big]-\big[a_{ij}+a_{ik},\ell_{jkl}\big]+\big[a_{jk},\ell_{ijl}+\ell_{ikl}\big]=&~0&&,\\
\big[a_{ij}+a_{ik}+a_{il},r_{jkl}\big]+\big[a_{kl},r_{ijk}+r_{ijl}\big]-\big[a_{jk}+a_{jl},r_{ikl}\big]=&~0&&,\\
\big[a_{il}+a_{jl}+a_{kl},\ell_{ijk}\big]+\big[a_{ij},\ell_{ikl}+\ell_{jkl}\big]-\big[a_{ik}+a_{jk},\ell_{ijl}\big]=&~0\quad&&,\\
\big[a_{ij}+a_{ik}+a_{il},\ell_{jkl}\big]+\big[a_{jk},r_{ijl}+r_{ikl}\big]-\big[a_{jl}+a_{kl},r_{ijk}\big]=&~0&&,\\
\big[a_{ij},r_{ikl}+r_{jkl}\big]-\big[a_{kl},\ell_{ijk}+\ell_{ijl}\big]=&~0&&,\\
\big[a_{ik},r_{ijl}-\ell_{jkl}-r_{jkl}\big]+\big[a_{jl},\ell_{ijk}+r_{ijk}-\ell_{ikl}\big]=&~0&&.
\end{alignat}
\end{subequations}
\item[(ii)] If $\{1\leq i<j\leq n+1\}\cap\{1\leq k<l\leq n+1\}=\varnothing$ then
\begin{equation}
\big[a_{ij},a_{kl}\big]=0\quad.
\end{equation}
\item[(iii)] If $\{1\leq i<j\leq n+1\}\cap\{1\leq k<l<m\leq n+1\}=\varnothing$ then, for $b_{klm}\in\{\ell_{klm},r_{klm}\}$,
\begin{equation}
\big[a_{ij},b_{klm}\big]=0\quad.
\end{equation}
\end{enumerate} 
\end{defi}
We are now ready to state our fundamental conjecture in a very concise form.
\begin{conj}\label{conj:fundamental}
For $n\in\bbN$, the $n^\mathrm{th}$ Drinfeld-Kohno 2-algebra is acyclic, i.e. $\ker(\partial)=0$.
\end{conj}
In the context of Example \ref{ex:End_otimes^n}, if we are given a coherent totally symmetric infinitesimal 2-braiding $t$ then, by construction of the definition, we have an $n^\mathrm{th}$ Drinfeld-Kohno 2-algebra as the subalgebra of $\End_{\otimes^n}$ generated by:
\begin{equation}
t_{ij}:\otimes^n\Rightarrow\otimes^n\quad,\quad\L_{ijk}:[t_{ij},t_{ik}+t_{jk}]\Rrightarrow0\quad,\quad\R_{ijk}:[t_{jk},t_{ij}+t_{ik}]\Rrightarrow0\quad,
\end{equation}
where $1\leq i<j<k\leq n+1$. In this case, Conjecture \ref{conj:fundamental} states that every modification in the $n^\mathrm{th}$ Drinfeld-Kohno 2-algebra of the form $\Xi:0\Rrightarrow0$ vanishes. This conjecture seems somewhat obvious for low $n\in\bbN$; for instance, the $2^\mathrm{nd}$ Drinfeld-Kohno 2-algebra \cite[Example 3.31]{Kem25b} is the subalgebra of $\End_{\otimes^2}$ generated \textit{freely} by $t_{12},t_{23},t_{13}:\otimes^2\Rightarrow\otimes^2$ together with $\L:[t_{12},t_{13}+t_{23}]\Rrightarrow0$ and $\R:[t_{23},t_{12}+t_{13}]\Rrightarrow0$.

\begin{rem}\label{rem:conjecture is fundamental}
Let us explain the power of Conjecture \ref{conj:fundamental}. Given a coherent totally symmetric infinitesimal 2-braiding $t$, we can strip the four axioms \eqref{eq:left tetrahedron}-\eqref{eq:Breen axiom index version} of instances of the symmetric braiding $\gamma$ to reveal equations in terms of $R$ and $t$. For example, \cite[Construction 5.1]{Kem25b} showed that the Breen polytope axiom \eqref{eq:Breen axiom index version} reduces to
\begin{align*}
&\big(e^{i\pi\hbar t}\big)_{e^{i\pi\hbar t_{12}}}+\Phi(t_{23},t_{12})\left[R_{213}\Phi(t_{12},t_{13})e^{i\pi t_{12}}-e^{i\pi t_{23}}\Phi(t_{13},t_{23})R_{321}+\big(e^{i\pi\hbar t}\big)_{e^{i\pi\hbar t_{23}}}\Phi(t_{12},t_{23})\right]\\
&+\left[R_{231}\Phi(t_{23},t_{13})e^{i\pi t_{23}}-e^{i\pi t_{12}}\Phi(t_{13},t_{12})R\right]\Phi(t_{12},t_{23})=0\quad.\numberthis\label{Breen polytope in t only}
\end{align*}
We say that terms like
\begin{subequations}\label{subeq:left congruence}
\begin{equation}
\big(e^{i\pi\hbar t}\big)_{e^{i\pi\hbar t_{12}}}\overset{\eqref{eqn:compositioncoherences}}{=}*^2_{\Id_\otimes,e^{i\pi\hbar t}\smallbox\Id_{\id_\C}|e^{i\pi\hbar t},\Id_{\otimes\smallbox\id_\C}}:e^{i\pi\hbar t_{12}}e^{i\pi\hbar t_{(12)3}}\Rrightarrow e^{i\pi\hbar t_{(12)3}}e^{i\pi\hbar t_{12}}
\end{equation}
are \textbf{congruences}; their explicit series formula is straightforward to derive (see \cite[(5.14)]{Kem25b}),
\begin{equation}
\big(e^{i\pi\hbar t}\big)_{e^{i\pi\hbar t_{12}}}=\sum_{\begin{smallmatrix}j=1\\k=1
\end{smallmatrix}}^\infty\frac{(i\pi\hbar)^{j+k}}{j!k!}\sum_{\begin{smallmatrix}1\leq l\leq j\\1\leq m\leq k
\end{smallmatrix}}t_{(12)3}^{m-1}\,t_{12}^{l-1}\L\,t_{12}^{j-l}t_{(12)3}^{k-m}\quad.
\end{equation}
\end{subequations}
The LHS of \eqref{Breen polytope in t only} is a modification endomorphic on $e^{i\pi\hbar t_{12}}e^{i\pi\hbar t_{(12)3}}$ but, using the linearity of pseudonatural transformations and modifications, that is the same thing as a modification endomorphic on 0 thus (given a series formula for $R$ in terms of $\L,\,\R$ and whiskerings by $t$) an element of the $2^\mathrm{nd}$ Drinfeld-Kohno 2-algebra of the form $\Xi:0\Rrightarrow0$. The other axioms \eqref{eq:associahedron}-\eqref{eq:Hexahedron} likewise demand the vanishing of some endomorphic modification made up of $\L,\,\R$ and whiskerings by $t$. 
\end{rem}

%%%%%%%%%%%%%%%%%%%%%%%%%%%%%%%%%%%%%%%%%%%%%%%%%%%%%%%%%%%%%

\section{Algebraic construction of the hexagonator series}\label{subsec:hexagonator}
This section reproduces our direct algebraic construction \cite[(4.78)]{Kem25b} of the hexagonator series. In contrast to \cite[Section 4]{Kem25b} and Section \ref{subsec:pentagonator}, Theorem \ref{theo:hexagonator} does not make use of any higher gauge theoretic methods concerning 2-connections and their 2-holonomy \cite{Baez,On2dhol}. We must first recall the explicit formula \cite[Theorem A.9]{LeMu} for Drinfeld's Knizhnik-Zamolodchikov associator series so we begin with the notion of a multiple zeta value.
\begin{defi}\label{def:MZV}
If $k\in\bbN\setminus\{0\}$, $s_1\in\bbN\setminus\{0,1\}$ and $s_2,\ldots,s_k\in\bbN\setminus\{0\}$ then we call
\begin{equation}\label{eq:multiple zeta function}
\zeta(s_1,\ldots,s_k):=\sum_{n_1>n_2>\cdots>n_k\geq1}^\infty\frac{1}{n_1^{s_1}\cdots n_k^{s_k}}
\end{equation} 
a \textbf{multiple zeta value} (MZV) or \textbf{Euler sum}.
\end{defi}
Given a finite length non-empty tuple $p$ of natural numbers, we denote such length tautologically as $\p$ thus $p=(p_1,\ldots,p_{\p})$. In this case, $p>0$ means that every entry is strictly positive, i.e. $p_1,\ldots,p_{\p}\in\bbN\setminus\{0\}$. Given another tuple $j$ of natural numbers, by $0\leq j\leq p$ we mean that $\j=\p$ and $0\leq j_i\leq p_i$ for all $1\leq i\leq\p$. We define $|p|:=\sum_{l=1}^{\p}p_l$ and, for $q>0$ such that $\q=\p$,  
\begin{equation}\label{eq:compact zeta notation}
\zeta_j^{p,q}:=(-1)^{|j|+|p|}\zeta\left(p_1+1,\{1\}^{q_1-1},\ldots,p_{\p}+1,\{1\}^{q_{\p}-1}\right)\prod_{l=1}^{\p}\binom{p_l}{j_l}\quad.
\end{equation} 
\begin{defi}\label{def:Drinfeld KZ series}
Given elements $A$ and $B$ of an associative unital $\bbC$-algebra and a formal deformation parameter $\hbar$, \textbf{Drinfeld's Knizhnik-Zamolodchikov associator series} $\Phi(A,B)$ is the following element of $\bbC\langle A,B\rangle[[\hbar]]$,
\begin{equation}\label{eq:LM expression for Phi}
1+\sum_{\{p,q>0\,|\,\p=\q\}}\hbar^{|p|+|q|}\sum_{\begin{smallmatrix}0\leq j\leq p\\0\leq k\leq q\end{smallmatrix}}\zeta_j^{p,q}\left(\prod_{l=1}^{\p}\binom{q_l}{k_l}(-1)^{k_l}\right)B^{|q|-|k|}A^{j_1}B^{k_1}\cdots A^{j_{\p}}B^{k_{\p}}A^{|p|-|j|}\quad.
\end{equation}
\end{defi}
\begin{rem}
We can compactify the expression for Drinfeld's Knizhnik-Zamolodchikov associator series \eqref{eq:LM expression for Phi} in two different ways, both of which we will need:
\begin{enumerate}
\item[(i)] We set $j_0:=0\,,~k_0:=|q|-|k|\,,~j_{\p+1}:=|p|-|j|\,,~k_{\p+1}:=0$ and
\begin{equation}
\zeta_{j,k}^{p,q}\,:=\,\zeta_j^{p,q}\prod_{l=1}^{\p}\binom{q_l}{k_l}(-1)^{k_l}
\end{equation}
so that \eqref{eq:LM expression for Phi} equals
\begin{equation}\label{eq:most compact expression of Drinfeld KZ series}
\Phi(A,B)\,=\,1+\sum_{\{p,q>0\,|\,\p=\q\}}\hbar^{|p|+|q|}\sum_{\begin{smallmatrix}0\leq j\leq p\\0\leq k\leq q\end{smallmatrix}}\zeta_{j,k}^{p,q}\prod_{l=0}^{\p+1}A^{j_l}B^{k_l}\quad.
\end{equation}
\item[(ii)] Let $\r_A$ denote right-multiplication by $A$ then \eqref{eq:LM expression for Phi} equals
\begin{equation}\label{eq:2nd most compact expression of Drinfeld KZ series}
\Phi(A,B)\,=\,1+\sum_{\{p,q>0\,|\,\p=\q\}}\hbar^{|p|+|q|}\sum_{0\leq j\leq p}\zeta_j^{p,q}\left(\ad^{q_{\p}}_B\r_A^{j_{\p}}\cdots\ad^{q_1}_B\r_A^{j_1}(1)\right)A^{|p|-|j|}\quad.
\end{equation}
\end{enumerate}
\end{rem}
We recall the \textbf{BRW identity} in $\bbC\langle A,B\rangle[[\hbar]]$ between Drinfeld's KZ associator series and the exponential \cite[Last equation in the proof of Theorem 22]{BRW}, i.e.
\begin{equation}\label{eq:BRW functional relation with A and B}
\Phi(A,-A-B)e^{-i\pi\hbar A}\Phi(B,A)= e^{-i\pi\hbar(A+B)}\Phi(B,-A-B)e^{i\pi\hbar B}\quad.
\end{equation}
For an infinitesimal 2-braiding $t$, we set $\Lambda:=t_{12}+t_{23}+t_{13}$ and $\t:=t_{13}-\Lambda$. We substitute $A=t_{12}$ and $B=t_{23}$ into \eqref{eq:BRW functional relation with A and B} while absorbing factors of $\hbar$ into $t$,
\begin{equation}\label{eq:BRW functional relation}
\Phi(t_{12},\t)e^{-i\pi t_{12}}\Phi(t_{23},t_{12})= e^{i\pi\t}\Phi(t_{23},\t)e^{i\pi t_{23}}\quad.
\end{equation}
\begin{theo}\label{theo:hexagonator}
We have an explicit formula for the right pre-hexagonator series
\begin{equation}\label{eq:theo hexagonator}
R:\Phi(t_{12},t_{13})e^{i\pi t_{(12)3}}\Phi(t_{23},t_{12})\xRrightarrow{~~~} e^{i\pi\hbar t_{13}}\Phi(t_{23},t_{13})e^{i\pi\hbar t_{23}}\quad.
\end{equation}
\end{theo}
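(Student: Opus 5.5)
The plan is to obtain $R$ by comparing the target identity \eqref{eq:theo hexagonator} with the BRW identity \eqref{eq:BRW functional relation}. The key observation is that \eqref{eq:BRW functional relation} holds strictly in the formal algebra $\bbC\langle A,B\rangle[[\hbar]]$. The domain and codomain of $R$ differ from its two sides only through the replacement of $\t=t_{13}-\Lambda$ by $t_{13}$, and of $-t_{12}$ by $t_{(12)3}=t_{13}+t_{23}$.

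\textbf{Step 1: rewrite the domain and codomain of $R$.} For a symmetric infinitesimal 2-braiding we have $\Lambda=t_{12}+t_{13}+t_{23}$. Since $\partial\L=[t_{12},t_{13}+t_{23}]=[t_{12},\Lambda]$ and $\partial\R=[t_{23},t_{12}+t_{13}]=[t_{23},\Lambda]$, the element $\Lambda$ is central up to the modifications $\L,\R$; by coherence and total symmetry $[t_{13},\Lambda]$ is also a boundary. Write $t_{13}=\t+\Lambda$ and $t_{(12)3}=-t_{12}+\Lambda$. I will then produce modifications
\begin{equation*}
\Phi(t_{12},t_{13})\Rrightarrow\Phi(t_{12},\t),\qquad
e^{i\pi t_{(12)3}}\Rrightarrow e^{-i\pi t_{12}}e^{i\pi\Lambda},\qquad
e^{i\pi t_{13}}\Rrightarrow e^{i\pi\t}e^{i\pi\Lambda},\qquad
\Phi(t_{23},t_{13})\Rrightarrow\Phi(t_{23},\t).
\end{equation*}
Each is obtained by expanding the series word by word and commuting every occurrence of $\Lambda$ to the right (or left), paying one whiskered $\L$, $\R$ or $\L_{132}$ for each transposition. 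Because $\Phi(A,B)$ with $B=B'+\Lambda$ is, by the commutative-shift property of the KZ associator, equal to $\Phi(A,B')$ whenever $\Lambda$ commutes with everything, the $\Lambda$-dependence collapses entirely and only the commutator modifications survive. I will use the explicit word formula \eqref{eq:most compact expression of Drinfeld KZ series} to write these modifications as explicit sums, much as in the congruence formula \eqref{subeq:left congruence}.

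\textbf{Step 2: move $e^{i\pi\Lambda}$ through $\Phi$.} In the domain I must move $e^{i\pi\Lambda}$ past $\Phi(t_{23},t_{12})$, and in the codomain past $\Phi(t_{23},\t)e^{i\pi t_{23}}$. Both moves are again implemented by $\L,\R$ whiskered by $t$. After these moves both sides have the form (BRW side)$\cdot e^{i\pi\Lambda}$, and the BRW identity \eqref{eq:BRW functional relation} matches them strictly. Lateral composition then gives
\begin{equation*}
R:=(\text{codomain rewriting})^{-1}\cdot(\text{domain rewriting}),
\end{equation*}
using the reverse and lateral composition of modifications, with an explicit series formula built from \eqref{eq:2nd most compact expression of Drinfeld KZ series}.

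\textbf{Main obstacle.} The hard part is making the commuting-$\Lambda$ modifications explicit and well defined. The shift property of $\Phi$ under $B\mapsto B+\Lambda$ is only an identity modulo commutators, so at the level of words I must verify that the sum of all inserted $\L$, $\R$ terms has exactly the boundary required by \eqref{eq:mod components are homotopies}. I expect this to work by a telescoping argument: for a word $w$ in $t$'s, moving one factor past another is a derivation-like sum, as in the Leibniz rule \eqref{eqn:dubindex splits prods}.

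There is a fallback if the shift argument produces unwieldy combinatorics. Treat $\Lambda$ formally and define the modification for $X\mapsto X+\Lambda$ applied to an arbitrary word by the derivation $w\mapsto\sum(\text{prefix})\,\Xi\,(\text{suffix})$, where $\partial\Xi=[\,\cdot\,,\Lambda]$. Exponentiating this derivation formally then gives the rewriting modifications, with boundaries correct by construction. Since the theorem asks only for an explicit formula and not for verification of the axioms, no further coherence check is needed at this stage; that check is deferred to Conjecture \ref{conj:fundamental}.
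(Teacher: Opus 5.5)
Your proposal is correct and uses the same ingredients as the paper's proof. These are: BCH-type modifications splitting $e^{i\pi t_{(12)3}}$ and $e^{i\pi t_{13}}$ into a $\Lambda$-factor and a factor in $t_{12}$ or $\t$; the modifications $\Phi(\,\cdot\,,t_{13})\Rrightarrow\Phi(\,\cdot\,,\t)$; a commutation of $e^{i\pi\Lambda}$ past an associator; and the strict BRW identity \eqref{eq:BRW functional relation} in the middle. The difference is bookkeeping: the paper splits $e^{i\pi\Lambda}$ off on the left, so it needs only one commutation (past $\Phi(t_{12},t_{13})$) and a single chain from domain to codomain, whereas your rightward placement costs two commutations (past $\Phi(t_{23},t_{12})$ and past $\Phi(t_{23},\t)e^{i\pi t_{23}}$) plus a reverse; also, $[t_{13},\Lambda]=-\partial(\L+\R)$ holds without coherence, so you can use $-(\L+\R)$ instead of $\L_{132}$, as the paper does, and keep the formula in terms of $\L,\R$ only.
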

\begin{proof}
If we have explicit series formulae for the following modifications:
\begin{subequations}
\begin{alignat}{6}
\int_{e^{i\pi t_{(12)3}}}^{e^{i\pi\Lambda}e^{-i\pi t_{12}}}&:\quad~e^{i\pi t_{(12)3}}&&\xRrightarrow{~~~}\quad e^{i\pi\Lambda}e^{-i\pi t_{12}}&&&,\label{eq:1/5 modification}\\
\int_{\Phi(t_{12},t_{13})e^{i\pi\Lambda}}^{e^{i\pi\Lambda}\Phi(t_{12},t_{13})}&:\Phi(t_{12},t_{13})e^{i\pi\Lambda}&&\xRrightarrow{~~~} e^{i\pi\Lambda}\Phi(t_{12},t_{13})\quad&&&,\label{eq:2/5 modification}\\
\int_{\Phi(t_{12},t_{13})}^{\Phi(t_{12},\t)}&:\quad\Phi(t_{12},t_{13})&&\xRrightarrow{~~~}\quad\Phi(t_{12},\t)&&&,\label{eq:3/5 modification}\\
\int_{\Phi(t_{23},\t)}^{\Phi(t_{23},t_{13})}&:\quad\Phi(t_{23},\t)&&\xRrightarrow{~~~}\quad\Phi(t_{23},t_{13})&&&,\label{eq:4/5 modification}\\
\int_{e^{i\pi\Lambda}e^{i\pi\t}}^{e^{i\pi t_{13}}}&:\quad e^{i\pi\Lambda}e^{i\pi\t}&&\xRrightarrow{~~~}\qquad e^{i\pi t_{13}}&&&,\label{eq:5/5 modification}
\end{alignat}
\end{subequations}
then we can use \eqref{eq:BRW functional relation} to construct \eqref{eq:theo hexagonator} as 
\begin{align*}
R:=&\left(\Phi(t_{12},t_{13})\int_{e^{i\pi t_{(12)3}}}^{e^{i\pi\Lambda}e^{-i\pi t_{12}}}+\int_{\Phi(t_{12},t_{13})e^{i\pi\Lambda}}^{e^{i\pi\Lambda}\Phi(t_{12},t_{13})}e^{-i\pi t_{12}}+e^{i\pi\Lambda}\int_{\Phi(t_{12},t_{13})}^{\Phi(t_{12},\t)}e^{-i\pi t_{12}}\right)\Phi(t_{23},t_{12})\\
&~+e^{i\pi\Lambda}e^{i\pi\t}\int_{\Phi(t_{23},\t)}^{\Phi(t_{23},t_{13})}e^{i\pi t_{23}}+\int_{e^{i\pi\Lambda}e^{i\pi\t}}^{e^{i\pi t_{13}}}\Phi(t_{23},t_{13})e^{i\pi t_{23}}\numberthis\quad.
\end{align*}
The modifications \eqref{eq:1/5 modification} and \eqref{eq:5/5 modification} were explicitly determined in \cite[(4.57b) and (4.73), respectively]{Kem25b},
\begin{alignat}{6}
&\int_{e^{i\pi t_{(12)3}}}^{e^{i\pi\Lambda}e^{-i\pi t_{12}}}=&&~\sum_{k=2}^\infty\frac{(i\pi)^k}{k!}\sum_{l=1}^{k-1}\sum_{m=0}^{k-l-1}\sum_{n=0}^{k-l-m-1}\binom{k-l}{m}(-1)^{m+1}t_{(12)3}^{l-1}\Lambda^n\L\Lambda^{k-l-m-n-1}t_{12}^m~&&&,\label{eq:BCH mod decomposing e^t_(12)3}\\
&\int_{e^{i\pi\Lambda}e^{i\pi\t}}^{e^{i\pi t_{13}}}~~\,=&&~\sum_{k=2}^\infty\frac{(i\pi)^k}{k!}\sum_{l=1}^{k-1}\sum_{m=0}^{k-l-1}\sum_{n=0}^{k-l-m-1}\binom{k-l}{m}t_{13}^{l-1}\Lambda^n(\L+\R)\Lambda^{k-l-m-n-1}\t^m&&&.
\end{alignat}
The modification $\int_{\Phi(t_{12},t_{13})e^{i\pi\Lambda}}^{e^{i\pi\Lambda}\Phi(t_{12},t_{13})}$ uses the alternative expression \eqref{eq:most compact expression of Drinfeld KZ series} for Drinfeld's KZ series and was determined in \cite[(4.71)]{Kem25b}, it is given as
\begin{align}
&\sum_{\begin{smallmatrix}
\{p,q>0\,|\,\p=\q\}\\1\leq m<\infty
\end{smallmatrix}}\sum_{\begin{smallmatrix}0\leq j\leq p\\0\leq k\leq q\end{smallmatrix}}\sum_{\begin{smallmatrix}0\leq l\leq\p+1\\1\leq n\leq m\end{smallmatrix}}\frac{(i\pi)^m}{m!}\zeta_{j,k}^{p,q}\Lambda^{n-1}\label{eq:mod commuting Phi_213 with e^lambda}\\
&\qquad\qquad~\times\left(\prod_{r=0}^{l-1}t_{12}^{j_r}t_{13}^{k_r}\right)\left(\sum_{r=1}^{j_l}t_{12}^{r-1}\L t_{12}^{j_l-r}t_{13}^{k_l}-t_{12}^{j_l}\sum_{r=1}^{k_l}t_{13}^{r-1}(\L+\R)t_{13}^{k_l-r}\right)\left(\prod_{r=l+1}^{\p+1}t_{12}^{j_r}t_{13}^{k_r}\right)\Lambda^{m-n}\nn
\end{align}
The modification $\int_{\Phi(t_{12},t_{13})}^{\Phi(t_{12},\t)}$ uses the other alternative expression \eqref{eq:2nd most compact expression of Drinfeld KZ series} for Drinfeld's KZ series and was determined in \cite[(4.29e)]{Kem25b},
\begin{align}\label{eq:debarring mod}
\nn&\sum_{\{p,q>0\,|\,\p=\q\}}\sum_{\begin{smallmatrix}0\leq j\leq p\\1\leq l\leq\p 
\end{smallmatrix}}\zeta_j^{p,q}\ad^{q_{\p}}_{t_{13}}\r_{t_{12}}^{j_{\p}}\cdots\ad^{q_{l+1}}_{t_{13}}\r_{t_{12}}^{j_{l+1}}\sum_{m=0}^{q_l-1}\ad_{t_{13}}^{q_l-m-1}\Bigg(\sum_{k_1=0}^{q_1}(-1)^{k_1}\binom{q_1}{k_1}\cdots\\&\cdots\sum_{k_{l-1}=0}^{q_{l-1}}(-1)^{k_{l-1}}\binom{q_{l-1}}{k_{l-1}}\sum_{k_l=0}^m(-1)^{k_l}\binom{m}{k_l}\sum_{n=0}^l\left(\prod_{r=0}^{n-1}t_{12}^{j_r}\t^{k_r}\right)\Bigg[t_{12}^{j_n}\sum_{r=1}^{k_n}\t^{r-1}(\L+\R)\t^{k_n-r}\nn\\
&-\sum_{r=1}^{j_n}t_{12}^{r-1}\L t_{12}^{j_n-r}\t^{k_n}\Bigg]t_{12}^{j_{n+1}}\t^{k_{n+1}}\cdots t_{12}^{j_l}\t^{k_l}\Bigg)t_{12}^{|p|-|j|}
\end{align}
where $j_0:=0$ and $k_0:=m-k_l+\sum_{n=1}^{l-1}(q_n-k_n)$. The modification $\int_{\Phi(t_{23},\t)}^{\Phi(t_{23},t_{13})}$ can be acquired from \eqref{eq:debarring mod} by multiplying by $-1$ and applying the index permutation $(1\leftrightarrow3)$.
\end{proof}
\section{Construction of the pentagonator series}\label{subsec:pentagonator}
Choosing $n=3$ in \cite[Definition 3.27]{Kem25b}, we express the 2-connection $\big(\A_\KZ^{n=3},\B_\CM^{n=3}\big)$ on $Y_4$:
\begin{subequations}\label{subeq:KZ 2-connection on Y_4}
\begin{alignat*}{6}
\A_\KZ^{n=3}=&\left(\frac{dz_1-dz_2}{z_1-z_2}\right)t_{12}+\left(\frac{dz_1-dz_3}{z_1-z_3}\right)t_{13}+\left(\frac{dz_1-dz_4}{z_1-z_4}\right)t_{14}\\&+\left(\frac{dz_2-dz_3}{z_2-z_3}\right)t_{23}+\left(\frac{dz_2-dz_4}{z_2-z_4}\right)t_{24}+\left(\frac{dz_3-dz_4}{z_3-z_4}\right)t_{34}\numberthis\\
\end{alignat*}
and
\begin{alignat*}{6}
\B_\CM^{n=3}=&~\frac{2}{(z_3-z_1)}\left(\frac{\R_{123}}{z_2-z_3}-\frac{\L_{123}}{z_1-z_2}\right)(dz_1\wedge dz_2+dz_2\wedge dz_3+dz_3\wedge dz_1)\\&+\frac{2}{(z_4-z_1)}\left(\frac{\R_{124}}{z_2-z_4}-\frac{\L_{124}}{z_1-z_2}\right)(dz_1\wedge dz_2+dz_2\wedge dz_4+dz_4\wedge dz_1)\\&+\frac{2}{(z_4-z_1)}\left(\frac{\R_{134}}{z_3-z_4}-\frac{\L_{134}}{z_1-z_3}\right)(dz_1\wedge dz_3+dz_3\wedge dz_4+dz_4\wedge dz_1)\\&+\frac{2}{(z_4-z_2)}\left(\frac{\R_{234}}{z_3-z_4}-\frac{\L_{234}}{z_2-z_3}\right)(dz_2\wedge dz_3+dz_3\wedge dz_4+dz_4\wedge dz_2)\quad.\numberthis
\end{alignat*}
\end{subequations}
The \textbf{diagonally-punctured complex plane} is defined as
\begin{equation}\label{eq:diagonally punctured plane}
\bbC^2_\#:=\big\{(z,u)\in\bbC^2\,|\,zu(z-1)(u-1)(z-u)\neq0\big\}\qquad.
\end{equation}
The map 
\begin{equation}
\varphi:\bbC^2_\#\times\bbC^\times\times\bbC\longrightarrow Y_4\qquad,\qquad(z,u,v,w)\longmapsto(w,zv+w,uv+w,v+w)
\end{equation}
is a birational biholomorphism with inverse given by
\begin{equation}
\varphi^{-1}:Y_4\longrightarrow\bbC^2_\#\times\bbC^\times\times\bbC\qquad,\qquad(z_1,z_2,z_3,z_4)\longmapsto\left(\frac{z_2-z_1}{z_4-z_1},\frac{z_3-z_1}{z_4-z_1},z_4-z_1,z_1\right)~.
\end{equation}
We pullback the 2-connection $\big(\A_\KZ^{n=3},\B_\CM^{n=3}\big)$ of \eqref{subeq:KZ 2-connection on Y_4} along the birational biholomorphism $\varphi$ and define $\big(\A:=\varphi^*\A_\KZ^{n=3},\B:=\varphi^*\B_\CM^{n=3}\big)$ thus:
\begin{subequations}\label{subeq:pulled back 2-connection}
\begin{alignat*}{6}
\A=&\left(\frac{t_{12}}{z}+\frac{t_{23}}{z-u}+\frac{t_{24}}{z-1}\right)dz+\left(\frac{t_{13}}{u}+\frac{t_{23}}{u-z}+\frac{t_{34}}{u-1}\right)du\\&+\frac{t_{12}+t_{13}+t_{14}+t_{23}+t_{24}+t_{34}}{v}dv\numberthis\\
\end{alignat*}
and
\begin{alignat*}{6}
\B=&~2\left(\frac{\L_{123}}{zu}+\frac{\R_{123}}{u(z-u)}+\frac{\L_{234}}{(1-z)(u-z)}+\frac{\R_{234}}{(1-z)(u-1)}\right)dz\wedge du\\&+\frac{2}{v}\left(\frac{\R_{123}+\L_{234}}{u-z}-\frac{\L_{123}+\L_{124}}{z}+\frac{\R_{124}-\L_{234}-\R_{234}}{1-z}\right)dv\wedge dz\\&+\frac{2}{v}\left(\frac{\L_{134}-\L_{123}-\R_{123}}{u}+\frac{\L_{234}+\R_{123}}{u-z}+\frac{\R_{134}+\R_{234}}{u-1}\right)du\wedge dv\quad.\numberthis
\end{alignat*}
\end{subequations}
As in \cite[Remark 3.28]{Kem25b}, this 2-connection is automatically fake flat; Cirio and Martins constructed it such \cite{Joao1,Joao,Joao2}. We now recontextualise \cite[Theorem 23]{Joao} and demonstrate a sufficient condition for the 2-flatness of \eqref{subeq:pulled back 2-connection}.
\begin{propo}
If $t$ is coherent and totally symmetric then \eqref{subeq:pulled back 2-connection} is 2-flat.
\end{propo}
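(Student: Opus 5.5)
To prove the proposition, I will compute the 2-curvature (the 3-form) $\mathcal{M}:=d\B+\A\wedge^{\rhd}\B$ and show that it vanishes. Here $\A\wedge^{\rhd}\B$ is the action of the 1-form $\A$ on $\B$ through commutators in the Drinfeld-Kohno 2-algebra, following the conventions of \cite[Definition 3.27]{Kem25b}. On $\bbC^2_\#\times\bbC^\times\times\bbC$, both $\A$ and $\B$ in \eqref{subeq:pulled back 2-connection} are independent of $w$ and have no $dw$ component. So the only possible 3-form component is along $dz\wedge du\wedge dv$, and $\mathcal{M}$ is a single coefficient function $M(z,u,v)$ valued in modifications, times $dz\wedge du\wedge dv$.

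First I collect $d\B$. The $dz\wedge du$ coefficient of $\B$ does not depend on $v$. The $dv\wedge dz$ coefficient is $v^{-1}$ times a function of $z$ alone, and the $du\wedge dv$ coefficient is $v^{-1}$ times a function of $u$ and $z$. The only term surviving in $d\B$ is therefore $\partial_z$ applied to the $du\wedge dv$ coefficient. This gives $\frac{2}{v}\frac{\L_{234}+\R_{123}}{(u-z)^2}$, with appropriate sign, along $dz\wedge du\wedge dv$.

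Next I expand $\A\wedge^{\rhd}\B$ as a sum of terms $\frac{1}{v}\cdot(\text{rational function of }z,u)\cdot[t_{ij},b_{klm}]$. These come from pairing:
\begin{itemize}
\item the $dv$-part of $\A$, which is $\Lambda_4:=\sum_{i<j}t_{ij}$ divided by $v$, with the $dz\wedge du$-part of $\B$;
\item the $dz$-part of $\A$ with the $du\wedge dv$-part of $\B$;
\item the $du$-part of $\A$ with the $dv\wedge dz$-part of $\B$.
\end{itemize}
Multiplying through by $v$, I obtain a rational function in $(z,u)$ with poles only along $z=0$, $u=0$, $z=1$, $u=1$ and $z=u$. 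I then decompose it into partial fractions in the basis of products $\frac{1}{(z-a)(u-b)}$ and $\frac{1}{(z-u)(\cdot)}$, using identities such as $\frac{1}{z(u-z)}=\frac{1}{u}\big(\frac1z+\frac1{u-z}\big)$ to reduce everything to linearly independent rational functions. The double pole $(u-z)^{-2}$ coming from $d\B$ must cancel against the $[t_{23},\cdot]$ contributions, and the key input for that cancellation is $\partial(\L_{234}+\R_{123})$-type bookkeeping.

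Finally, I collect the coefficient of each independent rational function and identify it, using disjoint-commutativity and the primitivity relations \eqref{subeq:L is primitive} to rewrite compound terms such as $t_{(123)4}=t_{14}+t_{24}+t_{34}$ and $t_{1(23)}=t_{12}+t_{13}$, with a linear combination of the six relations \eqref{subeq:again rewritten 5 relations}. The hypotheses enter precisely here: total symmetry gives the first five relations through the Corollary, and coherence gives \eqref{eq:Theorem 22 of CM}. Each coefficient then vanishes, so $\mathcal{M}=0$.

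I expect the main obstacle to be this last matching step. There are many coefficients, and each must be shown to be exactly one of the six relations, or a sum of them. I would organise the work by the pole structure. The $\frac{1}{zu}$ and $\frac{1}{(1-z)(u-1)}$ coefficients should give \eqref{eq:again 3rd relation} and \eqref{eq:again 2nd relation}. The coefficients involving $\frac{1}{u-z}$ should give \eqref{eq:again 1st relation} and \eqref{eq:again 4th relation}. The mixed $\frac{1}{z(u-1)}$ and $\frac{1}{u(1-z)}$ terms should give \eqref{eq:again 5th relation} and \eqref{eq:Theorem 22 of CM}. Along the way I would use the fact that terms like $[\Lambda_4,b_{klm}]$ reduce, by disjoint-commutativity, to brackets with $t_{ij}$ meeting $\{k,l,m\}$.
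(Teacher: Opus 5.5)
\textbf{Verdict.} Your overall plan matches the paper's. You compute the 2-curvature, clear the factor $v^{-1}$, and reduce by partial fractions to the six denominators $\frac{1}{zu}$, $\frac{1}{z(u-1)}$, $\frac{1}{u(z-u)}$, $\frac{1}{u(1-z)}$, $\frac{1}{(u-z)(1-z)}$, $\frac{1}{(u-1)(1-z)}$. You then read off each coefficient as one of the relations \eqref{subeq:again rewritten 5 relations}, and your assignment of relations to denominators is exactly the one the paper finds. However, your computation of $d\B$ is wrong, and the step you propose to handle it would fail.

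\textbf{The error in $d\B$.} The $dv\wedge dz$ coefficient of $\B$ is \emph{not} a function of $z$ alone: it contains $\frac{2}{v}\frac{\R_{123}+\L_{234}}{u-z}$. Its $u$-derivative contributes $-\frac{2}{v}\frac{\R_{123}+\L_{234}}{(u-z)^2}\,du\wedge dv\wedge dz$. Since $du\wedge dv\wedge dz=dz\wedge du\wedge dv$, this exactly cancels the $\partial_z$ term you found. Hence $d\B=0$. This is also visible before pulling back: each summand of $\B^{n=3}_\CM$ has the form $f(a,b)\,da\wedge db$ with $a=z_i-z_k$ and $b=z_j-z_k$, which is closed. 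So the 2-curvature reduces to $\A\wedge^{[\cdot,\cdot]}\B$, and that is all the paper computes.

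\textbf{Why your proposed cancellation cannot work.} The bare term $\frac{2}{v}\frac{\L_{234}+\R_{123}}{(u-z)^2}$ is quadratic in $t$, since $\L$ and $\R$ are. Every term of $\A\wedge^{[\cdot,\cdot]}\B$ is a bracket $[t_{ij},b]$, which is cubic in $t$. Replacing $t$ by $\lambda t$ preserves every hypothesis, so a cancellation valid for all such $t$ would force $\L_{234}+\R_{123}=0$, which is false in general. Appealing to $\partial(\L_{234}+\R_{123})$ does not help either: $\partial$ lands in pseudonatural transformations, whereas the 2-curvature is modification-valued. In fact the $(u-z)^{-2}$ terms of $\A\wedge^{[\cdot,\cdot]}\B$ already cancel among themselves. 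The term $\frac{t_{23}}{z-u}dz$ against the $du\wedge dv$ part gives $-\frac{2}{v(u-z)^2}[t_{23},\R_{123}+\L_{234}]$, and $\frac{t_{23}}{u-z}du$ against the $dv\wedge dz$ part gives the opposite. Had your $d\B$ been right, the 2-curvature would contain a term that does not vanish in general, and the proposition would be false. Once you establish $d\B=0$, the rest of your argument goes through as in the paper.

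\textbf{A minor point.} With four indices, every pair $\{i,j\}$ meets every triple, so disjoint-commutativity gives nothing for $[\Lambda_4,b_{klm}]$. All six brackets survive, and the partial-fraction recombination absorbs them into compound terms such as $t_{(123)4}$ and $t_{1(234)}$.
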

\begin{proof}
We define the modification $M$ as
\begin{equation}
\A\wedge^{[\cdot,\cdot]}\B=\frac{2^3}{v}Mdz\wedge du\wedge dv
\end{equation}
thus
\begin{align*}
M:=&~\frac{1}{zu}\left(\big[t_{12},\L_{134}-\R_{123}\big]-\big[t_{13},\L_{124}\big]+\big[t_{(13)4}+t_{2(34)},\L_{123}\big]\right)\\
&+\frac{1}{z(u-1)}\left(\big[t_{12},\R_{134}+\R_{234}\big]-\big[t_{34},\L_{123}+\L_{124}\big]\right)\\&+\frac{1}{z(u-z)}\left(\big[t_{12},\L_{234}+\R_{123}\big]-\big[t_{23},\L_{123}+\L_{124}\big]\right)\\
&+\frac{1}{u(u-z)}\left(\big[t_{23},\L_{123}-\L_{134}\big]+\big[t_{13},\L_{234}\big]-\big[t_{1(24)}+t_{(23)4},\R_{123}\big]\right)\\&+\frac{1}{(u-z)(u-1)}\left(\big[t_{34},\R_{123}+\L_{234}\big]-\big[t_{23},\R_{134}+\R_{234}\big]\right)\\&+\frac{1}{u(1-z)}\left(\big[t_{24},\L_{123}+\R_{123}-\L_{134}\big]+\big[t_{13},\R_{124}-\L_{234}-\R_{234}\big]\right)\\&+\frac{1}{(u-z)(1-z)}\left(\big[t_{23},\R_{124}-\R_{234}\big]-\big[t_{24},\R_{123}\big]+\big[t_{1(23)}+t_{(13)4},\L_{234}\big]\right)\\&+\frac{1}{(u-1)(1-z)}\left(\big[t_{34},\R_{124}-\L_{234}\big]-\big[t_{24},\R_{134}\big]+\big[t_{(12)3}+t_{1(24)},\R_{234}\big]\right)\qquad.\numberthis\label{eq:M as defined}
\end{align*}
It is straightforward to check that \eqref{eq:M as defined} simplifies to 
\begin{align*}
M=&~\frac{1}{zu}\left(\big[t_{12},\L_{134}+\L_{234}\big]-\big[t_{(12)3},\L_{124}\big]+\big[t_{(123)4},\L_{123}\big]\right)\\&+\frac{1}{z(u-1)}\left(\big[t_{12},\R_{134}+\R_{234}\big]-\big[t_{34},\L_{123}+\L_{124}\big]\right)\\&+\frac{1}{u(z-u)}\left(\big[t_{23},\L_{124}+\L_{134}\big]-\big[t_{1(23)},\L_{234}\big]+\big[t_{(123)4},\R_{123}\big]\right)\\&+\frac{1}{u(1-z)}\left(\big[t_{24},\L_{123}+\R_{123}-\L_{134}\big]+\big[t_{13},\R_{124}-\L_{234}-\R_{234}\big]\right)\\&+\frac{1}{(u-z)(1-z)}\left(\big[t_{23},\R_{124}+\R_{134}\big]-\big[t_{(23)4},\R_{123}\big]+\big[t_{1(234)},\L_{234}\big]\right)\\&+\frac{1}{(u-1)(1-z)}\left(\big[t_{34},\R_{123}+\R_{124}\big]-\big[t_{2(34)},\R_{134}\big]+\big[t_{1(234)},\R_{234}\big]\right)\numberthis
\end{align*}
which vanishes upon using \eqref{subeq:again rewritten 5 relations}.
\end{proof}
Following \cite[Subsection 2.4]{BRW}, we restrict to the following open triangle in $\bbR^2$,
\begin{equation}\label{eq:open full triangle}
U':=\{w=0<x=z<y=u<1=v\}\subset\bbC^2_\#\hookrightarrow\bbC^2_\#\times\bbC^\times\times\bbC\quad.
\end{equation}
\begin{rem}\label{rem:2-connection on open triangle}
Restricting to this subspace simplifies the pullback 2-connection \eqref{subeq:pulled back 2-connection} as follows:
\begin{alignat}{6}
\A_{|U'}=&\left(\frac{t_{12}}{x}+\frac{t_{23}}{x-y}+\frac{t_{24}}{x-1}\right)dx+\left(\frac{t_{13}}{y}+\frac{t_{23}}{y-x}+\frac{t_{34}}{y-1}\right)dy&&,\label{eq:connection 1-form pulled back and restricted}\\
\B_{|U'}=&~2\left(\frac{\L_{123}}{xy}+\frac{\R_{123}}{y(x-y)}+\frac{\L_{234}}{(1-x)(y-x)}+\frac{\R_{234}}{(1-x)(y-1)}\right)dx\wedge dy\quad&&.
\end{alignat}
Importantly, this 2-connection is still \textit{not} flat on the nose but only fake flat hence we will need to construct a 2-path whose 2-holonomy will contribute to the pentagonator series.
\end{rem}
We make use of Bordemann, Rivezzi and Weigel's affine 1-paths \cite[Figure 2]{BRW}:
\begin{subequations}
\begin{alignat}{6}
c_\i(r):=&\left((1-r)\v^2+r(\v-\v^2),\v\right)&&,\label{eq:c_I(r):=}\\
c_\ii(r):=&\,(1-r)\big(\v-\v^2,\v\big)+r\big(1-\v,1-\v+\v^2\big)&&,\\
c_\iii(r):=&\left(1-\v,(1-r)(1-\v+\v^2)+r(1-\v^2)\right)\qquad&&,\label{eq:c_III(r):=}\\
c_\iv(r):=&\left(\v^2,(1-r)\v+r(1-\v^2)\right)&&,\\
c_\V(r):=&\left((1-r)\v^2+r(1-\v),1-\v^2\right)&&.
\end{alignat}
\end{subequations}
Setting 
\begin{equation}\label{eq:c^s_II(r):=}
c^s_\ii(r):=(1-r)\,c_\i(1-s)+r\,c_\iii(s)\quad,
\end{equation}
we define a 2-path $c_\ii\,c_\i\xRightarrow{P_\i}(c_\iii\circ\iota)\,c^1_\ii$ as
\begin{equation}\label{eq:P_I 2-path}
P_\i(s,r):=\begin{cases}
      c_\i(2r)\,, & 0\leq r\leq\frac{1-s}{2}\\
      c^s_\ii(2r+s-1)\,, & \frac{1-s}{2}\leq r\leq1-\frac{s}{2}\\
      (c_\iii\circ\iota)(2r-1)\,, & 1-\frac{s}{2}\leq r\leq1
    \end{cases}\qquad.
\end{equation}
As in \cite[(4.10a)]{Kem25b}, the 2-holonomy of \eqref{eq:P_I 2-path} is given as
\begin{equation}\label{1st 2-holonomy}
W^{P_\i}=\int_0^1\int_{\frac{1-s}{2}}^{1-\frac{s}{2}}W_{1r}^{P_\i^s}\B\left[\frac{\partial P_\i^s}{\partial s},\frac{\partial P_\i^s}{\partial r}\right]W_{r0}^{P_\i^s}\,drds\quad.
\end{equation}
For $\frac{1-s}{2}\leq r\leq1-\frac{s}{2}$, we have
\begin{align*}
P_\i^s(r)\overset{\eqref{eq:P_I 2-path}}{=}\quad~\,&\,c^s_\ii(2r+s-1)\\
\overset{\eqref{eq:c^s_II(r):=}}{=}\quad~\,&\,(2-2r-s)\,c_\i(1-s)+(2r+s-1)\,c_\iii(s)\\
\overset{\eqref{eq:c_I(r):=},\eqref{eq:c_III(r):=}}{=}&\,\Big(\big(2-2r-s\big)\big(s\v^2+(1-s)(\v-\v^2)\big)+(2r+s-1)(1-\v)\\&\qquad,\,(2-2r-s)\v+\big(2r+s-1\big)\big((1-s)(1-\v+\v^2)+s(1-\v^2)\big)\Big)\\
=:\quad~~\,&\big(\,x(s,r)\,,\,y(s,r)\,\big)\numberthis\label{eq:P_I^s(r):=}
\end{align*}
which can be substituted in the expression for $\B\left[\frac{\partial P_\i^s}{\partial s},\frac{\partial P_\i^s}{\partial r}\right]$ given by
\begin{equation}
2\left(\frac{\L_{123}}{xy}+\frac{\R_{123}}{y(x-y)}+\frac{\L_{234}}{(1-x)(y-x)}+\frac{\R_{234}}{(1-x)(y-1)}\right)\left(\frac{\partial x}{\partial s}\frac{\partial y}{\partial r}-\frac{\partial x}{\partial r}\frac{\partial y}{\partial s}\right)\quad.
\end{equation}
Similarly, one has explicit expressions for the parallel transport terms $W_{1r}^{P_\i^s}$ and $W_{r0}^{P_\i^s}$ by evaluating the path-ordered exponential with respect to the connection \eqref{eq:connection 1-form pulled back and restricted} over the 1-path \eqref{eq:P_I^s(r):=}. 
Setting $c_\V^s(r):=\left((1-r)\v^2+r(1-\v),(1-s)\v+s(1-\v^2)\right)$, we define a 2-path $c^1_\ii\xRightarrow{P_\ii}c_\V\,c_\iv$,
\begin{equation}\label{eq:P_II 2-path}
P_\ii(s,r):=\begin{cases}
      c_\iv(2r)\,, & 0\leq r\leq\frac{s}{2}\\
      c_\V^s(2r-s)\,, & \frac{s}{2}\leq r\leq s\\
      c^1_\ii(r)\,, & s\leq r\leq1
    \end{cases}\qquad.
\end{equation}
As above, one has an explicit expression for the 2-holonomy
\begin{equation}\label{2nd 2-holonomy}
W^{P_\ii}=\int_0^1\int_{\frac{s}{2}}^sW_{1r}^{P_\ii^s}\B\left[\frac{\partial P_\ii^s}{\partial s},\frac{\partial P_\ii^s}{\partial r}\right]W_{r0}^{P_\ii^s}\,drds\quad.
\end{equation}
We define a 2-path $c_\iii\,c_\ii\,c_\i\xRightarrow{P}c_\V\,c_\iv$ as 
\begin{equation}
\begin{tikzcd}
	{c_\iii\,c_\ii\,c_\i} && {c_\V\,c_\iv} \\
	\\
	{c_\iii\,(c_\iii\circ\iota)\,c^1_\ii} && {c^1_\ii}
	\arrow["P", Rightarrow, dashed, from=1-1, to=1-3]
	\arrow["{c_\iii\,P_\i}"', Rightarrow, from=1-1, to=3-1]
	\arrow["{P_\mathrm{Triv}}"', Rightarrow, from=3-1, to=3-3]
	\arrow["{P_\ii}"', Rightarrow, from=3-3, to=1-3]
\end{tikzcd}\quad.
\end{equation}
The 2-functoriality of 2-holonomy \cite[Definition 3.25]{Kem25b} gives
\begin{equation}
W^P=W^{c_\iii}W^{P_\i}+W^{P_\ii}
\end{equation}
while the globularity condition imposes
\begin{align}
W^P:W^{c_\iii}W^{c_\ii}W^{c_\i}\xRrightarrow{~~~} W^{c_\V}W^{c_\iv}\quad.
\end{align}
\begin{theo}\label{theo:pentagonator}
Denoting $\Phi_{ijk}:=\Phi(t_{ij},t_{jk})$, we have an explicit formula for the pentagonator series 
\begin{equation}
\Pi:\Phi_{234}\Phi_{1(23)4}\Phi_{123}\Rrightarrow\Phi_{12(34)}\Phi_{(12)34}\quad.
\end{equation}
\end{theo}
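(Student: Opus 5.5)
The plan mirrors the proof of Theorem \ref{theo:hexagonator}. I will decompose the required modification $\Pi$ into the 2-holonomy $W^P$ of the 2-path $P:c_\iii\,c_\ii\,c_\i\Rightarrow c_\V\,c_\iv$, together with explicit correction modifications, for instance $\int_{e^{i\pi\Lambda}e^{-i\pi t_{12}}}^{e^{i\pi t_{(12)3}}}$, that compare the regularised parallel transports with the five KZ associators.

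The first step is to identify the 1-holonomies of Bordemann, Rivezzi and Weigel's five affine paths in the limit $\v\to0$, following \cite[Subsection 2.4]{BRW}. Up to the regularising factors $\v^{\pm t_{ij}}$ (whiskerings by exponentials of sums of $t_{ij}$), they are as follows:
\begin{itemize}
\item $W^{c_\i}$ recovers $\Phi_{123}$, living at the face $x\approx0$;
\item $W^{c_\ii}$ recovers $\Phi_{1(23)4}$;
\item $W^{c_\iii}$ recovers $\Phi_{234}$;
\item $W^{c_\iv}$ recovers $\Phi_{(12)34}$;
\item $W^{c_\V}$ recovers $\Phi_{12(34)}$.
\end{itemize}

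Classically, flatness of $\A$ gives these identifications on the nose. Here $\A_{|U'}$ is only flat up to $\partial(\B)$: on each edge the relevant one-form only involves commuting-up-to-$\L,\R$ generators. So for each edge I will write down a modification between the regularised transport and the asymptotic expression $\v^{a}\Phi(\cdot,\cdot)\v^{-b}$. I will build it exactly as \eqref{eq:debarring mod} and \eqref{eq:mod commuting Phi_213 with e^lambda} were built, by inserting $\L_{ijk}$ or $\R_{ijk}$ at each position where a four-term relation was used. The relevant identities are the primitivity relations \eqref{subeq:L is primitive} and their $\R$-analogues, which rewrite, for example, $[t_{12},t_{13}+t_{14}+t_{23}+t_{24}]$ as $\partial(\L_{1(2\cdot)})$-type terms.

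Next comes the bookkeeping. I will assemble
\[
\Pi:=\text{(edge corrections for $c_\V c_\iv$)}\cdot W^P\cdot\overleftarrow{\text{(edge corrections for $c_\iii c_\ii c_\i$)}}\,,
\]
composed laterally, with whiskerings inserted so that domain and codomain match $\Phi_{234}\Phi_{1(23)4}\Phi_{123}$ and $\Phi_{12(34)}\Phi_{(12)34}$. The $\v$-dependent regularising factors must cancel. I will argue this via the globularity $W^P:W^{c_\iii}W^{c_\ii}W^{c_\i}\Rrightarrow W^{c_\V}W^{c_\iv}$ together with the corner modifications, such as $\int_{e^{i\pi\Lambda}e^{i\pi\t}}^{e^{i\pi t_{13}}}$-type terms with $\v$ in place of $e^{i\pi}$, which commute $\v^{t}$ past $\Phi$.

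The resulting expression is a modification built from $\L,\R$ and whiskerings by $t$. Its $\v$-independence is then justified in one of two ways: by 2-flatness (the preceding Proposition, which makes $W^P$ depend only on the endpoints up to thin homotopy), or, more cleanly, by Conjecture \ref{conj:fundamental}. Under the conjecture, any two such modifications with the same domain and codomain agree, so $\Pi$ is independent of the choices made and we may take $\v\to0$ term by term.

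The main obstacle is the convergence of $W^{P_\i}$ and $W^{P_\ii}$ as $\v\to0$. The integrand of \eqref{1st 2-holonomy} contains $\B$, which has poles $1/(xy)$ and $1/((1-x)(y-1))$ near the corners $(0,0)$ and $(1,1)$ that the 2-paths approach. Regularising $W^{P}$ therefore requires subtracting divergent corner contributions. I would first try to show that these divergences are exactly $\log\v$-whiskerings of $\L_{123}$ and $\R_{234}$ that cancel against the edge corrections. Failing a closed form, the fallback is to state $\Pi$ as the $\v\to0$ limit of the explicit sum, with the series coefficients given by iterated integrals expressible in MZVs.
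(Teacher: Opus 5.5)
This is essentially the paper's route: via \cite[(2.63), (2.64)]{BRW}, $W^P$ is read as a modification between $\v$-dressed products of the five associators, and the $\v$-factors are then stripped off by composing with explicit congruence, BCH-type and $\Phi$--$\v^{\Lambda}$ commuting modifications built from $\L,\R$ and whiskerings by $t$. Two of your side claims are wrong, though neither is load-bearing: first, the edge identifications are one-variable ODE asymptotics that need no flatness and no correction modifications, since all the $\L,\R$ content enters only when $\v$-factors are swapped at the junctions (e.g.\ $\v^{t_{23}}\v^{-t_{1(23)}}$ via $\R_{123}$) and pushed through the $\Phi$'s; second, 2-flatness cannot give $\v$-independence, because varying $\v$ moves the boundary 1-paths of $P$, and the paper uses neither 2-flatness nor Conjecture \ref{conj:fundamental}, simply suppressing the harmless terms via \cite[(2.62)]{BRW}.
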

\begin{proof}
We suppress the third argument in the LHS of \cite[(2.55)]{BRW} given that we are only actually interested in the limit $\v\to0$ and \cite[(2.62)]{BRW} guarantees that such harmless terms remain just that. With this point in mind, \cite[(2.63) and (2.64)]{BRW} gives us
\begin{align}\label{eq:W^P}
\v^{t_{34}}\Phi_{234}\v^{-t_{23}}\v^{t_{(23)4}}\Phi_{1(23)4}\v^{-t_{1(23)}}\v^{t_{23}}\Phi_{123}\v^{-t_{12}}\xRrightarrow{W^P} \v^{t_{2(34)}}\Phi_{12(34)}\v^{-2t_{12}}\v^{2t_{34}}\Phi_{(12)34}\v^{-t_{(12)3}}
\end{align}
which we rearrange as
\begin{align}\label{eq:conjugated W^P}
\Phi_{234}\v^{-t_{23}}\v^{t_{(23)4}}\Phi_{1(23)4}\v^{-t_{1(23)}}\v^{t_{23}}\Phi_{123}\xRrightarrow{M_0}\v^{-t_{34}} \v^{t_{2(34)}}\Phi_{12(34)}\v^{-2t_{12}}\v^{2t_{34}}\Phi_{(12)34}\v^{-t_{(12)3}}\v^{t_{12}}\,,
\end{align}
where $M_0:=\v^{-t_{34}}W^P\v^{t_{12}}$. By direct comparison with \eqref{subeq:left congruence}, we have:
\begin{subequations}\label{subeq:R_4 and L_1}
\begin{align}
\int_{\v^{t_{23}}\v^{-t_{1(23)}}}^{\v^{-t_{1(23)}}\v^{t_{23}}}=&\sum_{\begin{smallmatrix}j=1\\k=1
\end{smallmatrix}}^\infty(-1)^k\frac{(\ln\v)^{j+k}}{j!k!}\sum_{\begin{smallmatrix}1\leq l\leq j\\1\leq m\leq k
\end{smallmatrix}}t_{1(23)}^{m-1}t_{23}^{l-1}\R_{123}t_{23}^{j-l}t_{1(23)}^{k-m}:\v^{t_{23}}\v^{-t_{1(23)}}\Rrightarrow\v^{-t_{1(23)}}\v^{t_{23}}\,,\\
\int^{\v^{-t_{23}}\v^{t_{(23)4}}}_{\v^{t_{(23)4}}\v^{-t_{23}}}=&\sum_{\begin{smallmatrix}j=1\\k=1
\end{smallmatrix}}^\infty(-1)^{j+1}\frac{(\ln\v)^{j+k}}{j!k!}\sum_{\begin{smallmatrix}1\leq l\leq j\\1\leq m\leq k
\end{smallmatrix}}t_{(23)4}^{m-1}t_{23}^{l-1}\L_{234}t_{23}^{j-l}t_{(23)4}^{k-m}:\v^{t_{(23)4}}\v^{-t_{23}}\Rrightarrow\v^{-t_{23}}\v^{t_{(23)4}}\,,
\end{align}
\end{subequations}
which we can compose with \eqref{eq:conjugated W^P} to give
\begin{align}
\Phi_{234}\v^{t_{(23)4}}\v^{-t_{23}}\Phi_{1(23)4}\v^{t_{23}}\v^{-t_{1(23)}}\Phi_{123}\xRrightarrow{M_1}\v^{-t_{34}}\v^{t_{2(34)}}\Phi_{12(34)}\v^{-2t_{12}}\v^{2t_{34}}\Phi_{(12)34}\v^{-t_{(12)3}}\v^{t_{12}}\label{eq:1st move}
\end{align}
where 
\begin{equation}
M_1:=\Phi_{234}\left(\int^{\v^{-t_{23}}\v^{t_{(23)4}}}_{\v^{t_{(23)4}}\v^{-t_{23}}}\Phi_{1(23)4}\v^{t_{23}}\v^{-t_{1(23)}}+\v^{-t_{23}}\v^{t_{(23)4}}\Phi_{1(23)4}\int_{\v^{t_{23}}\v^{-t_{1(23)}}}^{\v^{-t_{1(23)}}\v^{t_{23}}}\right)\Phi_{123}+M_0\,.
\end{equation}
The modifications:
\begin{equation}\label{subeq:BCH mods for (23)4 and 1(23)}
\int_{\v^{\Lambda_{234}}\v^{-t_{23}}}^{\v^{t_{(23)4}}}:\v^{\Lambda_{234}}\v^{-t_{23}}\xRrightarrow{~~~}\v^{t_{(23)4}}\qquad,\qquad\int_{\v^{t_{23}}\v^{-\Lambda_{123}}}^{\v^{-t_{1(23)}}}:\v^{t_{23}}\v^{-\Lambda_{123}}\xRrightarrow{~~~}\v^{-t_{1(23)}}
\end{equation}
are analogous to \eqref{eq:BCH mod decomposing e^t_(12)3} and are given as, respectively:
\begin{subequations}\label{subeq:formula for BCH mods for (23)4 and 1(23)}
\begin{alignat}{6}
&\sum_{k=2}^\infty\frac{(\ln\v)^k}{k!}\sum_{l=1}^{k-1}\sum_{m=0}^{k-l-1}\sum_{n=0}^{k-l-m-1}\binom{k-l}{m}(-1)^mt_{(23)4}^{l-1}\Lambda_{234}^n\L_{234}\Lambda_{234}^{k-l-m-n-1}t_{23}^m&&,\\
&\sum_{k=2}^\infty\frac{(\ln\v)^k}{k!}\sum_{l=1}^{k-1}\sum_{m=0}^{k-l-1}\sum_{n=0}^{k-l-m-1}\binom{k-l}{m}(-1)^{l+m}t_{1(23)}^{l-1}t_{23}^n\R_{123}t_{23}^{k-l-m-n-1}\Lambda_{123}^m\quad&&.
\end{alignat}
\end{subequations}
Composing \eqref{subeq:BCH mods for (23)4 and 1(23)} with \eqref{eq:1st move}, we have
\begin{align}
\Phi_{234}\v^{\Lambda_{234}}\v^{-2t_{23}}\Phi_{1(23)4}\v^{2t_{23}}\v^{-\Lambda_{123}}\Phi_{123}\xRrightarrow{M_2}\v^{-t_{34}}\v^{t_{2(34)}}\Phi_{12(34)}\v^{-2t_{12}}\v^{2t_{34}}\Phi_{(12)34}\v^{-t_{(12)3}}\v^{t_{12}}\label{eq:2nd move}
\end{align}
where 
\begin{equation}
M_2:=\Phi_{234}\left(\v^{\Lambda_{234}}\v^{-2t_{23}}\Phi_{1(23)4}\v^{t_{23}}\int_{\v^{t_{23}}\v^{-\Lambda_{123}}}^{\v^{-t_{1(23)}}}+\int_{\v^{\Lambda_{234}}\v^{-t_{23}}}^{\v^{t_{(23)4}}}\v^{-t_{23}}\Phi_{1(23)4}\v^{t_{23}}\v^{-t_{1(23)}}\right)\Phi_{123}+M_1\,.
\end{equation}
The modifications:
\begin{subequations}\label{subeq:mods commuting Phi_234, Phi_1(23)4 and Phi_123}
\begin{alignat}{6}
&\int_{\Phi_{123}\v^{-\Lambda_{123}}}^{\v^{-\Lambda_{123}}\Phi_{123}}&&:\,\Phi_{123}\v^{-\Lambda_{123}}\xRrightarrow{~~~}\v^{-\Lambda_{123}}\Phi_{123}\quad&&&,\\
&\int_{\v^{\Lambda_{234}}\Phi_{234}}^{\Phi_{234}\v^{\Lambda_{234}}}&&:~\v^{\Lambda_{234}}\Phi_{234}~\,\xRrightarrow{~~~}\Phi_{234}\v^{\Lambda_{234}}&&&,\\
&\int_{\v^{2t_{23}}\Phi_{1(23)4}}^{\Phi_{1(23)4}\v^{2t_{23}}}&&:\v^{2t_{23}}\Phi_{1(23)4}\xRrightarrow{~~~}\Phi_{1(23)4}\v^{2t_{23}}&&&
\end{alignat}
\end{subequations}
are analogous to \eqref{eq:mod commuting Phi_213 with e^lambda} thus we directly compute them as, respectively:
\begin{subequations}
\begin{align}
&\sum_{\begin{smallmatrix}
\{p,q>0\,|\,\p=\q\}\\1\leq m<\infty
\end{smallmatrix}}\sum_{\begin{smallmatrix}0\leq j\leq p\\0\leq k\leq q\end{smallmatrix}}\sum_{\begin{smallmatrix}0\leq l\leq\p+1\\1\leq n\leq m\end{smallmatrix}}\frac{(-\ln\v)^m}{m!}\zeta_{j,k}^{p,q}\Lambda_{123}^{n-1}\\
&\qquad~\times\left(\prod_{r=0}^{l-1}t_{12}^{j_r}t_{23}^{k_r}\right)\left(\sum_{r=1}^{j_l}t_{12}^{r-1}\L_{123}t_{12}^{j_l-r}t_{23}^{k_l}+t_{12}^{j_l}\sum_{r=1}^{k_l}t_{23}^{r-1}\R_{123}t_{23}^{k_l-r}\right)\left(\prod_{r=l+1}^{\p+1}t_{12}^{j_r}t_{23}^{k_r}\right)\Lambda_{123}^{m-n}\quad,\nn
\end{align}
\begin{align}
-&\sum_{\begin{smallmatrix}
\{p,q>0\,|\,\p=\q\}\\1\leq m<\infty
\end{smallmatrix}}\sum_{\begin{smallmatrix}0\leq j\leq p\\0\leq k\leq q\end{smallmatrix}}\sum_{\begin{smallmatrix}0\leq l\leq\p+1\\1\leq n\leq m\end{smallmatrix}}\frac{(\ln\v)^m}{m!}\zeta_{j,k}^{p,q}\Lambda_{234}^{n-1}\\
&\qquad~\times\left(\prod_{r=0}^{l-1}t_{23}^{j_r}t_{34}^{k_r}\right)\left(\sum_{r=1}^{j_l}t_{23}^{r-1}\L_{234}t_{23}^{j_l-r}t_{34}^{k_l}+t_{23}^{j_l}\sum_{r=1}^{k_l}t_{34}^{r-1}\R_{234}t_{34}^{k_l-r}\right)\left(\prod_{r=l+1}^{\p+1}t_{23}^{j_r}t_{34}^{k_r}\right)\Lambda_{234}^{m-n}\nn
\end{align}
and
\begin{align}
&\sum_{\begin{smallmatrix}
\{p,q>0\,|\,\p=\q\}\\1\leq m<\infty
\end{smallmatrix}}\sum_{\begin{smallmatrix}0\leq j\leq p\\0\leq k\leq q\end{smallmatrix}}\sum_{\begin{smallmatrix}0\leq l\leq\p+1\\1\leq n\leq m\end{smallmatrix}}\frac{(2\ln\v)^m}{m!}\zeta_{j,k}^{p,q}t_{23}^{n-1}\left(\prod_{r=0}^{l-1}t_{1(23)}^{j_r}t_{(23)4}^{k_r}\right)\\
&\qquad\qquad\quad\times\left(\sum_{r=1}^{j_l}t_{1(23)}^{r-1}\R_{123}t_{1(23)}^{j_l-r}t_{(23)4}^{k_l}+t_{1(23)}^{j_l}\sum_{r=1}^{k_l}t_{(23)4}^{r-1}\L_{234}t_{(23)4}^{k_l-r}\right)\left(\prod_{r=l+1}^{\p+1}t_{1(23)}^{j_r}t_{(23)4}^{k_r}\right)t_{23}^{m-n}\quad.\nn
\end{align}
\end{subequations}
Composing \eqref{subeq:mods commuting Phi_234, Phi_1(23)4 and Phi_123} with \eqref{eq:2nd move}, we have
\begin{align}
M_3:\v^{\Lambda_{234}}\Phi_{234}\Phi_{1(23)4}\Phi_{123}\v^{-\Lambda_{123}}\Rrightarrow\v^{-t_{34}}\v^{t_{2(34)}}\Phi_{12(34)}\v^{-2t_{12}}\v^{2t_{34}}\Phi_{(12)34}\v^{-t_{(12)3}}\v^{t_{12}}\label{eq:3rd move}
\end{align}
where
\begin{align}
M_3:=~&\v^{\Lambda_{234}}\Phi_{234}\Phi_{1(23)4}\int_{\Phi_{123}\v^{-\Lambda_{123}}}^{\v^{-\Lambda_{123}}\Phi_{123}}\nn\\
&+\Bigg(\int_{\v^{\Lambda_{234}}\Phi_{234}}^{\Phi_{234}\v^{\Lambda_{234}}}\Phi_{1(23)4}+\Phi_{234}\v^{\Lambda_{234}}\v^{-2t_{23}}\int_{\v^{2t_{23}}\Phi_{1(23)4}}^{\Phi_{1(23)4}\v^{2t_{23}}}\Bigg)\v^{-\Lambda_{123}}\Phi_{123}+M_2\quad.
\end{align}
We rearrange \eqref{eq:3rd move} as
\begin{align}
M_4:\Phi_{234}\Phi_{1(23)4}\Phi_{123}\Rrightarrow\v^{-\Lambda_{234}}\v^{-t_{34}}\v^{t_{2(34)}}\Phi_{12(34)}\v^{-2t_{12}}\v^{2t_{34}}\Phi_{(12)34}\v^{-t_{(12)3}}\v^{t_{12}}\v^{\Lambda_{123}}\label{eq:4th move}
\end{align}
where $M_4:=\v^{-\Lambda_{234}}M_3\v^{\Lambda_{123}}$. We have $\v^{-2t_{12}}\v^{2t_{34}}=\v^{2t_{34}}\v^{-2t_{12}}$ hence we consider
\begin{equation}\label{subeq:mods commuting Phi_12(34), Phi_(12)34}
\int_{\v^{-2t_{12}}\Phi_{(12)34}}^{\Phi_{(12)34}\v^{-2t_{12}}}:\v^{-2t_{12}}\Phi_{(12)34}\Rrightarrow\Phi_{(12)34}\v^{-2t_{12}}\quad,\quad\int^{\v^{2t_{34}}\Phi_{12(34)}}_{\Phi_{12(34)}\v^{2t_{34}}}:\Phi_{12(34)}\v^{2t_{34}}\Rrightarrow\v^{2t_{34}}\Phi_{12(34)}
\end{equation}
which are analogous to \eqref{subeq:mods commuting Phi_234, Phi_1(23)4 and Phi_123} thus we directly compute them as, respectively,
\begin{subequations}
\begin{align}
&\sum_{\begin{smallmatrix}
\{p,q>0\,|\,\p=\q\}\\1\leq m<\infty
\end{smallmatrix}}\sum_{\begin{smallmatrix}0\leq j\leq p\\0\leq k\leq q\\0\leq l\leq\p+1\\1\leq n\leq m\end{smallmatrix}}\tfrac{(-2\ln\v)^m}{m!}\zeta_{j,k}^{p,q}t_{12}^{n-1}\left[\prod_{r=0}^{l-1}t_{(12)3}^{j_r}t_{34}^{k_r}\right]\sum_{r=1}^{j_l}t_{(12)3}^{r-1}\L_{123}t_{(12)3}^{j_l-r}t_{34}^{k_l}\left[\prod_{r=l+1}^{\p+1}t_{(12)3}^{j_r}t_{34}^{k_r}\right]t_{12}^{m-n}
\end{align}
and 
\begin{align}
&-\sum_{\begin{smallmatrix}
\{p,q>0\,|\,\p=\q\}\\1\leq m<\infty
\end{smallmatrix}}\sum_{\begin{smallmatrix}0\leq j\leq p\\0\leq k\leq q\\0\leq l\leq\p+1\\1\leq n\leq m\end{smallmatrix}}\tfrac{(2\ln\v)^m}{m!}\zeta_{j,k}^{p,q}t_{34}^{n-1}\left[\prod_{r=0}^{l-1}t_{12}^{j_r}t_{2(34)}^{k_r}\right]t_{12}^{j_l}\sum_{r=1}^{k_l}t_{2(34)}^{r-1}\R_{234}t_{2(34)}^{k_l-r}\left[\prod_{r=l+1}^{\p+1}t_{12}^{j_r}t_{2(34)}^{k_r}\right]t_{34}^{m-n}
\end{align}
\end{subequations}
which we can compose with \eqref{eq:4th move} to give
\begin{align}\label{eq:5th move}
M_5:\Phi_{234}\Phi_{1(23)4}\Phi_{123}\Rrightarrow\v^{-\Lambda_{234}}\v^{-t_{34}}\v^{t_{2(34)}}\v^{2t_{34}}\Phi_{12(34)}\Phi_{(12)34}\v^{-2t_{12}}\v^{-t_{(12)3}}\v^{t_{12}}\v^{\Lambda_{123}}
\end{align}
where 
\begin{align}
M_5:=~&\v^{-\Lambda_{234}}\v^{-t_{34}}\v^{t_{2(34)}}\left(\Phi_{12(34)}\v^{2t_{34}}\int_{\v^{-2t_{12}}\Phi_{(12)34}}^{\Phi_{(12)34}\v^{-2t_{12}}}+\int^{\v^{2t_{34}}\Phi_{12(34)}}_{\Phi_{12(34)}\v^{2t_{34}}}\Phi_{(12)34}\v^{-2t_{12}}\right)\v^{-t_{(12)3}}\v^{t_{12}}\v^{\Lambda_{123}}\nn\\
&+M_4
\end{align}
Penultimately, we consider
\begin{subequations}\label{subeq:R_1 and L_4}
\begin{align}
\int_{\v^{-t_{12}}\v^{-t_{(12)3}}}^{\v^{-t_{(12)3}}\v^{-t_{12}}}=\sum_{\begin{smallmatrix}j=1\\k=1
\end{smallmatrix}}^\infty\frac{(-\ln\v)^{j+k}}{j!k!}\sum_{\begin{smallmatrix}1\leq l\leq j\\1\leq m\leq k
\end{smallmatrix}}t_{(12)3}^{m-1}t_{12}^{l-1}\L_{123}t_{12}^{j-l}t_{(12)3}^{k-m}:\v^{-t_{12}}\v^{-t_{(12)3}}\Rrightarrow\v^{-t_{(12)3}}\v^{-t_{12}}
\end{align}
and
\begin{align}
\int^{\v^{t_{2(34)}}\v^{-t_{34}}}_{\v^{-t_{34}}\v^{t_{2(34)}}}=\sum_{\begin{smallmatrix}j=1\\k=1
\end{smallmatrix}}^\infty(-1)^j\frac{(\ln\v)^{j+k}}{j!k!}\sum_{\begin{smallmatrix}1\leq l\leq j\\1\leq m\leq k
\end{smallmatrix}}t_{2(34)}^{m-1}t_{34}^{l-1}\R_{234}t_{34}^{j-l}t_{2(34)}^{k-m}:\v^{-t_{34}}\v^{t_{2(34)}}\Rrightarrow\v^{t_{2(34)}}\v^{-t_{34}}
\end{align}
\end{subequations}
which we compose with \eqref{eq:5th move} to give
\begin{align}\label{eq:6th move}
M_6:\Phi_{234}\Phi_{1(23)4}\Phi_{123}\Rrightarrow\v^{-\Lambda_{234}}\v^{t_{2(34)}}\v^{t_{34}}\Phi_{12(34)}\Phi_{(12)34}\v^{-t_{12}}\v^{-t_{(12)3}}\v^{\Lambda_{123}}
\end{align}
where
\begin{align}
M_6~:=~~&M_5+\v^{-\Lambda_{234}}\int^{\v^{t_{2(34)}}\v^{-t_{34}}}_{\v^{-t_{34}}\v^{t_{2(34)}}}\v^{2t_{34}}\Phi_{12(34)}\Phi_{(12)34}\v^{-2t_{12}}\v^{-t_{(12)3}}\v^{\Lambda_{123}}\nn\\&+\v^{-\Lambda_{234}}\v^{t_{2(34)}}\v^{t_{34}}\Phi_{12(34)}\Phi_{(12)34}\v^{-t_{12}}\int_{\v^{-t_{12}}\v^{-t_{(12)3}}}^{\v^{-t_{(12)3}}\v^{-t_{12}}}\v^{t_{12}}\v^{\Lambda_{123}}\quad.
\end{align}
Lastly, we consider the modifications 
\begin{equation}\label{subeq:BCH mods for (12)3 and 2(34)}
\int_{\v^{\Lambda_{123}}}^{\v^{t_{(12)3}}\v^{t_{12}}}:\v^{\Lambda_{123}}\xRrightarrow{~~~}\v^{t_{(12)3}}\v^{t_{12}}\qquad,\qquad\int_{\v^{-\Lambda_{234}}}^{\v^{-t_{34}}\v^{-t_{2(34)}}}:\v^{-\Lambda_{234}}\xRrightarrow{~~~}\v^{-t_{34}}\v^{-t_{2(34)}}
\end{equation}
which are analogous to \eqref{subeq:formula for BCH mods for (23)4 and 1(23)} and are given as, respectively,
\begin{subequations}\label{subeq:formula for BCH mods for (12)3 and 2(34)}
\begin{alignat}{6}
&\sum_{k=2}^\infty\frac{(\ln\v)^k}{k!}\sum_{l=1}^{k-1}\sum_{m=0}^{k-l-1}\sum_{n=0}^{k-l-m-1}\binom{k-l}{m}\Lambda_{123}^{l-1}t_{(12)3}^n\L_{123}t_{(12)3}^{k-l-m-n-1}t_{12}^m&&,\\
&\sum_{k=2}^\infty\frac{(\ln\v)^k}{k!}\sum_{l=1}^{k-1}\sum_{m=0}^{k-l-1}\sum_{n=0}^{k-l-m-1}\binom{k-l}{m}(-1)^{k-1}\Lambda_{234}^{l-1}t_{34}^n\R_{234}t_{34}^{k-l-m-n-1}t_{2(34)}^m\quad&&.
\end{alignat}
\end{subequations}
Finally, we compose \eqref{eq:6th move} with \eqref{subeq:BCH mods for (12)3 and 2(34)} to arrive at
\begin{align}
\Pi~:=~~&M_6+\v^{-\Lambda_{234}}\int_{\v^{-\Lambda_{234}}}^{\v^{-t_{34}}\v^{-t_{2(34)}}}\v^{t_{2(34)}}\v^{t_{34}}\Phi_{12(34)}\Phi_{(12)34}\v^{-t_{12}}\v^{-t_{(12)3}}\v^{\Lambda_{123}}\nn\\
&+\Phi_{12(34)}\Phi_{(12)34}\v^{-t_{12}}\v^{-t_{(12)3}}\int_{\v^{\Lambda_{123}}}^{\v^{t_{(12)3}}\v^{t_{12}}}\quad.
\end{align}
\end{proof}
%%%%%%%%%%%%%%%%%%%%%%%%%%%%%%%%%%%%%%%%%%%%%%%%%%%%%%

\end{document}